\newcommand\un{\mathds{1}}
\newcommand\eps{\varepsilon}
\renewcommand\phi{\varphi}
\newcommand\pro[1]{\mathbb{P}\left(#1\right)}
\newcommand\esp[1]{\mathbb{E}\left[#1\right]}
\newcommand\uno[1]{\un_{\left\{#1\right\}}}
\newtheorem{thm}{Theorem}[section]
\newtheorem{prop}[thm]{Proposition}
\newtheorem{assu}{Assumption}
\newtheorem{lem}[thm]{Lemma}
\newtheorem{rem}[thm]{Remark}
\newtheorem{ex}{Example}
\newcommand\n{\mathbb{N}}
\renewcommand\r{\mathbb{R}}
\renewcommand\ll\left
\newcommand\rr\right
\begin{document}

\begin{frontmatter}

\title{Well-posedness and propagation of chaos for McKean-Vlasov equations with jumps and locally Lipschitz coefficients}

\runtitle{McKean-Vlasov equations with locally Lipschitz coefficients}

\begin{aug}

\author{\fnms{Xavier} \snm{Erny}\ead[label=e4]{xavier.erny@univ-evry.fr}}

\address{
  Universit\'e Paris-Saclay, CNRS, Univ Evry, Laboratoire de Math\'ematiques et Mod\'elisation d'Evry, 91037, Evry, France\\
	}

\runauthor{X. Erny}

 \end{aug}

\begin{abstract}
We study McKean-Vlasov equations where the coefficients are locally Lipschitz continuous. We prove the strong well-posedness and a propagation of chaos property. These questions are classical under the assumptions that the coefficients are Lipschitz continuous. In the locally Lipschitz case, we use truncation arguments and Osgood's lemma instead of Gr\"onwall's lemma. Technical difficulties appear in the proofs, in particular for the existence of solution of the McKean-Vlasov equations. This proof relies on a Picard iteration scheme that is not guaranteed to converge in an $L^1-$sense. However, we prove its convergence in distribution, and the (strong) well-posedness of the equation
\end{abstract}

\begin{keyword}[class=MSC]
	\kwd{60J60}
  \kwd{60K35}
  \end{keyword}

\begin{keyword}
 \kwd{McKean-Vlasov equations}
 \kwd{Mean field interaction}
 \kwd{Interacting particle systems}
 \kwd{Propagation of chaos}
\end{keyword}

\end{frontmatter}

\section{Introduction}

The aim of this paper is to prove the strong well-posedness and a propagation of chaos property for Mckean-Vlasov equations. These are SDEs where the coefficients depend on the solution of the equation and on the law of this solution. This type of equation arises naturally in the framework of $N-$particle systems where the particles interact in a mean field way: this phenomenon can be seen as a law of large numbers. Indeed, in examples where the dynamic of the $N-$particle system is directed by an SDE, the mean field interactions can be expressed as a dependency of the coefficients on the empirical measure of the system. And as $N$ goes to infinity, this empirical measure converges to the law of any particle of the limit system. This entails natural dependencies of the coefficients on the law of the solution of the limit SDE. For instance, see \cite{de_masi_hydrodynamic_2015} and \cite{fournier_toy_2016} for examples in neural network modeling, \cite{fischer_continuous_2016} for an example in portfolio modeling, and \cite{carmona_mean_2016} for an application in mean field games. 

The weak well-posedness and the propagation of chaos are classical for the McKean-Vlasov equations without jump term, even without assuming that the coefficients are Lipschitz continuous. \cite{gartner_mckean-vlasov_1988} treats both questions in this frame. We can also mention more recent work on the well-posedness of McKean-Vlasov equations without jump term as \cite{mishura_existence_2020} and \cite{chaudru_de_raynal_strong_2020}, with different assumptions on the smoothness of the coefficients, and \cite{lacker_strong_2018} that investigates the well-posedness and the propagation of chaos.

In this paper, we consider McKean-Vlasov equations with jumps. The questions about the strong well-posedness and the propagation of chaos have also been studied in this framework under globally Lipschitz assumptions on the coefficients: see \cite{graham_mckean-vlasov_1992} for the well-posedness and \cite{andreis_mckeanvlasov_2018} for the propagation of chaos. Note that in Section~4 of \cite{andreis_mckeanvlasov_2018}, these questions are treated in a multi-dimensional case, where the drift coefficient is of the form $-\nabla b_1(x) + b_2(x,m),$ where $b_1$ is $C^1$ and convex, and $b_2$, as well as the jump coefficient and the volatility coefficient, are globally Lipschitz.



{
The novelty of our results is to work on McKean-Vlasov equations with jumps and with locally Lipschitz coefficients. The local Lipschitz constants of the drift and jump terms are assumed to growth at most linearly, and the Brownian coefficient is assumed to be Lipschitz continuous. In addition, we suppose that the drift and jump coefficients growth at most linearly, and that the Brownian coefficient is bounded (see Assumptions~\ref{hypmoment} and~\ref{hypnonlip} for a precise and complete statement of the hypothesis).
}

The first main result is the strong well-posedness, under this locally Lipschitz assumption, of the following McKean-Vlasov equation
$$dX_t = b(X_t,\mu_t)dt + \sigma(X_t,\mu_t)dW_t + \int_{\r_+\times E} \Phi(X_{t-},\mu_{t-},u)\uno{z\leq f(X_{t-},\mu_{t-})}d\pi(t,z,u),$$
where $\mu_t$ is the distribution of $X_t,$ $W$ a Brownian motion, $\pi$ a Poisson measure and $E$ some measurable space (see the beginning of Section~\ref{sectionwellposedmckeangeneral} for details on the notation). To prove the well-posedness when the coefficients are locally Lipschitz continuous, we adapt the computations of the proofs in the globally Lipschitz continuous case. We use a truncation argument to handle the dependency of the local Lipschitz constant w.r.t. to the variables. On the contrary of the globally Lipschitz case, Gr\"onwall's lemma does not allow to conclude immediately. In the locally Lipschitz case, we have to use a generalization of this lemma: Osgood's lemma (see Lemma~\ref{osgood}). The uniqueness of solution of the McKean-Vlasov equation follows rather quickly from Osgood's lemma and the truncation argument, but other difficulties emerge in the proof of the existence of solution. We construct a weak solution of the equation using a Picard iteration scheme, but the fact that the coefficients are only locally Lipschitz continuous does not allow to prove that this scheme converges in an $L^1-$sense. Instead, we prove that a subsequence converges in distribution to some limit that is shown to be a solution of the equation. Some technical difficulties emerge in this part of the proof for two reasons. The first one is that the Picard scheme is not shown to converge but only to have a converging subsequence. This implies that we need to control the variation between two consecutive steps of the scheme. The second one is that we only prove a convergence in distribution, thus it is not straightforward that the limit of the Picard scheme is solution to the equation. It is shown studying its semimartingale characteristics.

The second main result is a propagation of chaos property of McKean-Vlasov particle systems under the same locally Lipschitz assumptions. More precisely, it is the convergence of the following $N-$particle system
\begin{align*}
dX^{N,i}_t=& b(X^{N,i}_t,\mu^N_t)dt + \sigma(X^{N,i}_t,\mu^N_t)dW^i_t + \int_{\r_+\times F^{\n^*}}\Psi(X^{N,i}_{t-},\mu^N_{t-},v^i)\uno{z\leq f(X^{N,i}_{t-},\mu^N_{t-})}d\pi^i(t,z,v)\\
&+ \frac1N\sum_{\substack{j=1\\j\neq i}}^N\int_{\r_+\times F^{\n^*}}\Theta(X^{N,j}_{t-},X^{N,i}_{t-},\mu^N_{t-},v^j,v^i)\uno{z\leq f(X^{N,j}_{t-},\mu^N_{t-})}d\pi^j(t,z,v),
\end{align*}
where $\mu^N_t:=N^{-1}\sum_{j=1}^N\delta_{X^{N,j}_t}$, to the infinite system
\begin{align*}
d\bar X^{i}_t=& b(\bar X^{i}_t,\bar \mu_t)dt + \sigma(\bar X^{i}_t,\bar \mu_t)dW^i_t + \int_{\r_+\times F^{\n^*}}\Psi(\bar X^{i}_{t-},\bar\mu_{t-},v^i)\uno{z\leq f(\bar X^{i}_{t-},\bar\mu_{t-})}d\pi^i(t,z,v)\\
&+ \int_\r\int_{F^{\n^*}}\Theta(x,\bar X^{i}_{t},\bar \mu_{t},v^1,v^2)f(x,\bar\mu_t)d\nu(v)d\bar\mu_t(x),
\end{align*}
where $\bar \mu_t:=\mathcal{L}(\bar X_t)$, as $N$ goes to infinity. The $W^i$ ($i\geq 1$) are independent standard Brownian motions, the $\pi^i$ ($i\geq 1$) are independent Poisson measures, $F$ is some measurable space and $\n^*$ denotes the set of the positive integers (see Section~\ref{sectionchaosmckeangeneral} for details on the notation). The proof of this second main result relies on a similar reasoning as the one used to prove the uniqueness of the McKean-Vlasov equation: a truncation argument and Osgood's lemma.

Let us note that this propagation of chaos property has already been proven under different hypothesis. Indeed, the $N-$particle system and the limit system above are the same as in \cite{andreis_mckeanvlasov_2018}. Note also that, in this model, for each $N\in\n^*$, the particles $X^{N,i}_t$ ($1\leq i\leq N$) do not only interact through the empirical measure $\mu^N_t,$ but also through the simultaneous jumps term.

Let us remark that in Assumption~\ref{hypmoment}, we also assume the initial condition to admit finite exponential moments. This property is used to obtain a priori estimates on the exponential moments of the solutions of the McKean-Vlasov equation (see Lemma~\ref{apriorimckeangeneral}). This is important in the proof of Theorem~\ref{wellposedmckeangeneral} to do the truncation arguments mention above.


Let us finally mention that, in this paper, we chose to work in dimension one to simplify the notation, but the results still hold in finite dimensions.

{\bf Organization.} In Section~\ref{sectionwellposedmckeangeneral}, we state and prove our first main result: the well-posedness of the McKean-Vlasov equation~\eqref{mckeangeneral} with locally Lipschitz coefficients. Section~\ref{sectionchaosmckeangeneral} is devoted to our second main result, the propagation of chaos in the same framework.

\subsection{Notation}

Let us introduce some notation we use throughout the paper:
\begin{itemize}
\item If $X$ is random variable, we denote by $\mathcal{L}(X)$ its distribution.
\item If $X$ and $X_n$ ($n\in\n^*$) are random variables, we denote by $X_n\stackrel{\mathcal{L}}{\underset{n\rightarrow+\infty}{\longrightarrow}} X$ for "$(X_n)_n$ converges in distribution to $X$".
\item $\mathcal{P}_1(\r)$ is the space of probability measures on~$\r$ with finite first moment. This space will always be endowed with the first-order Wassertein metric~$W_1$ defined by: for $m_1,m_2\in\mathcal{P}_1(\r),$
$$W_1(m_1,m_2)=\underset{X_1\sim m_1,X_2\sim m_2}{\inf}\esp{|X_1-X_2|}=\underset{f\in Lip_1}{\sup}\int_\r f(x)dm_1(x) - \int_\r f(x)dm_2(x),$$
with $Lip_1$ the space of Lipschitz continuous functions with Lipschitz constant non-greater than one. Let us note that characterizations of this convergence are given in Theorem~6.9 and Definition~6.8 of \cite{villani_optimal_2008}.
\item For $T>0$ and $(G,d)$ a Polish space, $D([0,T],G)$ (resp. $D(\r_+,G)$) denotes the space of c\`adl\`ag $G-$valued functions defined on $[0,T]$ (resp. $\r_+$) endowed with Skorohod topology, whence this space is Polish. Let us recall that the convergence of a sequence $(x_n)_n$ of $D([0,T],G)$ to some $x$ in Skorohod topology is equivalent to the existence of continuous increasing functions $\lambda_n$ satisfying $\lambda_n(0)=0,$ $\lambda_n(T)=T$ and both
$$\underset{0\leq t\leq T}{\sup}|\lambda_n(t)-t|\textrm{ and }\underset{0\leq t\leq T}{\sup}d(x(\lambda_n(t)),x_n(t))$$
vanish as $n$ goes to infinity. In the following, we call such a sequence $(\lambda_n)_n$ a sequence of time-changes.
\item $L$ denotes the Lipschitz constant of the coefficients (see Assumption~\ref{hypnonlip}), $a$ a positive constant defined in Assumption~\ref{hypmoment}, and $k_0\in\n$ in Assumption~\ref{hypnonlip}.
\item $C$ denotes any arbitrary positive constant, whose value can change from line to line in an equation. If the constant depends on some parameter $\theta,$ we write $C_\theta$ instead.
\end{itemize}

\section{Well-posedness of McKean-Vlasov equations}
\label{sectionwellposedmckeangeneral}

This section is dedicated to prove the well-posedness of the following McKean-Vlasov equation.

\begin{equation}
\label{mckeangeneral}
dX_t = b(X_t,\mu_t)dt + \sigma(X_t,\mu_t)dW_t + \int_{\r_+\times E} \Phi(X_{t-},\mu_{t-},u)\uno{z\leq f(X_{t-},\mu_{t-})}d\pi(t,z,u),
\end{equation}
with $\mu_t=\mathcal{L}(X_t),$ $W$ a standard one-dimensional Brownian motion, $\pi$ a Poisson measure on $\r_+\times\r_+\times E$ having intensity $dt\cdot dz\cdot d\rho(u),$ where $(E,\mathcal{E},\rho)$ is a $\sigma-$finite measure space. The assumptions on the coefficients are specified in Assumptions~\ref{hypmoment} and~\ref{hypnonlip} below. Let us note here that $f$ is assumed to be non-negative. {The conditions specified in Assumption~\ref{hypmoment} allows to prove that any solution of~\eqref{mckeangeneral} admits a priori some exponential moments (cf Lemma~\ref{apriorimckeangeneral})}.

{
\begin{assu}\label{hypmoment}$ $
\begin{enumerate}
\item Growth condition: there exists some $C>0$ such that for any $x\in\r,m\in\mathcal{P}_1(\r),n\in\n^*,$
\begin{align*}
&|b(x,m)|\leq C\ll(1+|x|\rr),\\
&|\sigma(x,m)|\leq C,\\
&\int_E|\ll(x+\Phi(x,m,u))^n-x^n\rr|f(x,m) d\rho(u) \leq C\cdot n \ll(1+|x|^n\rr).
\end{align*}
\item Initial condition: there exists some constant~$a>0$ such that
$$\esp{e^{a|X_0|}}<\infty.$$
\end{enumerate}
\end{assu}
}

{
\begin{ex}\label{resetjump}
The above conditions on the functions~$b$ and~$\sigma$ are quite usual, but not the one on the jump term. Let us give an example in which it is satisfied. Let $\Phi$ be defined as: for all $x\in\r,m\in\mathcal{P}_1(\r),u\in\r,$
$$\Phi(x,m,u) := -x + \phi(m,u),$$
where $\phi:\mathcal{P}_1(\r)\times E\rightarrow \r.$ If the function $f$ is bounded and the function $\phi$ is $[-1,1]-$valued, then the condition of Assumption~\ref{hypmoment} on the jump term is satisfied.

This kind of function~$\Phi$ appears naturally in the models where the jump term is a kind of reset term. For example, in neurosciences, if $(X_t)_t$ models the membrane potential of a neurone, and if the jump times are the times at which the neuron emits a spike, then at its jump times the potential $X$ has to be reset at its resting value~0. This kind of model has been studied in \cite{fournier_toy_2016} and in~\cite{erny_conditional_2021} (with $\phi\equiv 0$). In addition the term $\phi(m,u)$ can be seen as a noise term, where $u$ is a random variable with law~$\rho.$
\end{ex}
}

{
\begin{rem}
Technically, Assumption~\ref{hypmoment} is only used to obtain some control on the processes that we manipulate: Lemma~\ref{apriorimckeangeneral},~\eqref{aprioripicardmckeangeneral} and Lemma~\ref{apriorixnmckeangeneral}. So the result of Theorem~\ref{wellposedmckeangeneral} (resp. Theorem~\ref{chaosmckeangeneral}) still hold true if Assumption~\ref{hypmoment} is replaced by any another under which Lemma~\ref{hypmoment} and~\eqref{aprioripicardmckeangeneral} hold true (resp. Lemma~\ref{apriorixnmckeangeneral} holds true).
\end{rem}
}

\begin{assu}$ $
\label{hypnonlip}
\begin{enumerate}
\item Locally Lipschitz conditions: there exists $k_0\in\n,$ such that for all $x_1,x_2\in\r,m_1,m_2\in\mathcal{P}_1(\r),$
\begin{multline*}
|b(x_1,m_1)-b(x_2,m_2)| + \int_E\int_{\r_+}|\Phi(x_1,m_1,u)\uno{z\leq f(x_1,m_1)} - \Phi(x_2,m_2,u)\uno{z\leq f(x_2,m_2)}|dzd\rho(u)\\
\leq L\ll(1+|x_1|+|x_2| + { \int_\r |x|^{k_0}dm_1(x) + \int_\r |x|^{k_0}dm_2(x)}\rr)\ll(|x_1-x_2|+W_1(m_1,m_2)\rr).
\end{multline*}
\item Globally Lipschitz condition for~$\sigma:$ for all $x_1,x_2\in\r,m_1,m_2\in\mathcal{P}_1(\r),$
$$|\sigma(x_1,m_1)-\sigma(x_2,m_2)|\leq L\ll(|x_1-x_2| + W_1(m_1,m_2)\rr).$$
\end{enumerate}
\end{assu}

{ Let us remark that in Assumption~\ref{hypnonlip}, if for example $k_0=2,$ then the locally Lipschitz conditions are trivially satisifed for measures $m_1,m_2\in\mathcal{P}_1(\r)$ with infinite second moment. Indeed, the RHS of the inequality is infinite while the LHS is finite.}

\begin{rem}
\label{remhypmckeangeneral}
If we consider equation~\eqref{mckeangeneral} without the jump term (that is $\Phi\equiv 0$), then, we can adapt the proof of Theorem~\ref{wellposedmckeangeneral} to the case where $\sigma$ is also locally Lipschitz continuous. More precisely, we can replace the two first Items of Assumption~\ref{hypnonlip} by: for all $x_1,x_2\in\r,m_1,m_2\in\mathcal{P}_1(\r),$
\begin{multline*}
|b(x_1,m_1)-b(x_2,m_2)| + |\sigma(x_1,m_1)-\sigma(x_2,m_2)|\\
\leq L\ll(1+\sqrt{|x_1|}+\sqrt{|x_2|} + \sqrt{\int_\r |x|^{k_0}dm_1(x)} + \sqrt{\int_\r |x|^{k_0}dm_2(x)}\rr)\ll(|x_1-x_2|+W_1(m_1,m_2)\rr).
\end{multline*}
See Remark~\ref{rem2hypmckeangeneral} for more details on the adaptation of the proof.
\end{rem}


{
\begin{rem}
A sufficient condition to obtain the locally Lipschitz condition of the jump term in Assumption~\ref{hypnonlip} for $k_0=1$ is given by the following direct conditions on the functions $f$ and $\Phi:$ there exists some positive constant~$C$ such that for all $x_1,x_2\in\r,m_1,m_2\in\mathcal{P}_1(\r),$
\begin{align*}
|f(x_1,m_1) - f(x_2,m_2)|\leq& C \ll(|x_1-x_2| + W_1(m_1,m_2)\rr),\\
\int_E \ll|\Phi(x_1,m_1,u)-\Phi(x_2,m_2,u)\rr|d\rho(u)\leq & C \ll(|x_1-x_2| + W_1(m_1,m_2)\rr).
\end{align*}
\end{rem}
}

\begin{ex}
A natural form of the coefficient of a McKean-Vlasov equation is given by the so-called "true McKean-Vlasov" case. In our framework, it is possible to consider the function~$b$ under the following form
$$b(x,m)= \int_\r \tilde b(x,y)dm(y),$$
with $\tilde b:\r^2\rightarrow\r.$

For $b$ to satisfy the Lipschitz condition of Assumption~\ref{hypnonlip} in this example, it is sufficient to assume that: for all $x,x',y,y'\in\r,$ 
$$|\tilde b(x,y) - \tilde b(x',y')|\leq C(1+|x|+|x'|)(|x-x'| + |y-y'|).$$
Indeed, for any $x,x'\in\r,m,m'\in\mathcal{P}_1(\r),$
\begin{align*}
|b(x,m)-b(x',m')|\leq & |b(x,m) - b(x',m)| + |b(x',m) - b(x',m')|\\
\leq & \int_\r |\tilde b(x,y) - \tilde b(x',y)|dm(y) + \ll|\int_\r \tilde b(x',y)dm(y) - \int_\r \tilde b(x',y)dm'(y)\rr|\\
\leq & C(1+|x|+|x'|)|x-x'| + C(1+2|x'|)W_1(m,m'),
\end{align*}
where the second quantity of the last line has been obtained using Kantorovich-Rubinstein duality (see Remark~6.5 of \cite{villani_optimal_2008}) and the fact that, for a fixed $x'$, the function $y\mapsto\tilde b(x',y)$ is Lipschitz continuous with Lipschitz constant $C(1+2|x'|).$
\end{ex}

{
\begin{ex}
Let us mention that the conditions of Assumption~\ref{hypnonlip} appears naturally when one studies some mean field limits of particle systems. For example, in \cite{fournier_toy_2016} and \cite{erny_conditional_2021}, the authors study models where the form of the jump term is the one described in Example~\ref{resetjump}, and in particular, the jump term leads to a framework where the coefficients are not globally Lipschitz. In these two papers, the results are proved under an Assumption that guarantees the function $x\mapsto xf(x)$ (without dependency w.r.t. the measure variable~$m$) to be globally Lipschitz w.r.t. some appropriate metrics.

Note that the locally Lipschitz condition for the drift term is also natural in the mean field limit frame. Indeed, in this framework, a jump term in the SDEs of some $N-$particles system can become a drift term in the limit system (as $N$ goes to infinity) where the corresponding drift coefficient is the product of the jump height function with the jump rate function: see for instance the last term of the equations~\eqref{xnmckeangeneral} and~\eqref{barxmckeangeneral} in Section~\ref{sectionchaosmckeangeneral}. 
\end{ex}
}

\begin{thm}
\label{wellposedmckeangeneral}
Under Assumptions~\ref{hypmoment} and~\ref{hypnonlip}, there exists a unique strong solution of~\eqref{mckeangeneral}.
\end{thm}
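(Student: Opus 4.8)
The plan is to follow the classical Yamada--Watanabe strategy adapted to the locally Lipschitz setting: establish pathwise uniqueness first, then construct a weak solution via a Picard scheme, and finally combine the two to obtain a unique strong solution. Throughout, the key technical device is a \emph{truncation argument}: for $R>0$ let $\tau_R$ be the first time the candidate process (or the difference of two processes) leaves $[-R,R]$, and work on the event $\{t<\tau_R\}$, where the local Lipschitz bound of Assumption~\ref{hypnonlip}(1) becomes a genuine Lipschitz bound with constant proportional to $R$ plus the uniformly bounded exponential-moment terms. The reason Assumption~\ref{hypnonlip}(3)--(4) insist on bounded coefficients and finite exponential moments is precisely so that a priori estimates of the form $\sup_{t\le T}\esp{e^{a|X_t|}}<\infty$ hold; these control the measure-dependent factor $\int e^{a|x|}dm(x)$ uniformly in time and let us bound $\pro{\tau_R\le T}$ by something that vanishes as $R\to\infty$ (via Markov's inequality applied to the exponential moment).

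For \textbf{uniqueness}, suppose $X$ and $Y$ are two solutions with the same initial data, driven by the same $W$ and $\pi$. Set $g(t)=\esp{\sup_{s\le t\wedge\tau_R}|X_s-Y_s|}$. Applying It\^o's formula / the triangle inequality to the three terms of \eqref{mckeangeneral}, using the Burkholder--Davis--Gundy inequality for the $\sigma$-term and the $L^1$ estimate on the compensated jump term, and using that $\mu_s=\mathcal L(X_s)$, $\nu_s=\mathcal L(Y_s)$ so that $W_1(\mu_s,\nu_s)\le\esp{|X_s-Y_s|}\le g(s)$, one obtains an inequality of the form $g(t)\le C_R\int_0^t \gamma(g(s))\,ds$ where $\gamma$ is the concave modulus coming from the locally Lipschitz structure (here essentially $\gamma(r)=r$ up to the truncation, but the point of stating Osgood's lemma is that after optimizing over $R$ one genuinely lands in the Osgood regime rather than the Gr\"onwall regime). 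Lemma~\ref{osgood} then forces $g\equiv 0$ on $[0,T\wedge\tau_R]$; letting $R\to\infty$ and using $\pro{\tau_R\le T}\to0$ gives $X=Y$ a.s.

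For \textbf{existence}, I would set up the Picard iteration $X^{n+1}$ solving the SDE with coefficients frozen at the law $\mu^n_t=\mathcal L(X^n_t)$ of the previous step (each step is a genuine SDE with locally Lipschitz coefficients in $x$ and a fixed time-dependent measure, hence solvable). The obstacle flagged in the introduction is real: because the coefficients are not globally Lipschitz, one cannot show $\esp{\sup_{s\le T}|X^{n+1}_s-X^n_s|}\to0$ directly. The workaround is to prove (i) uniform-in-$n$ a priori bounds, in particular $\sup_n\sup_{t\le T}\esp{e^{a|X^n_t|}}<\infty$ and tightness of $(\mathcal L(X^n))_n$ on $D([0,T],\r)$ (via an Aldous-type criterion, using boundedness of $b,\sigma,f$ and the moment bounds on $\Phi$), so that a subsequence converges in distribution to some $X$; (ii) a control on consecutive increments $\esp{|X^{n+1}_t-X^n_t|\wedge 1}$ or on $W_1(\mu^{n+1}_t,\mu^n_t)$ that is summable or at least goes to zero along the subsequence, which is what lets one identify the limiting law as a fixed point $\mu_t=\mathcal L(X_t)$; and (iii) identification of the limit as a solution of \eqref{mckeangeneral} by passing to the limit in the semimartingale characteristics (the drift $b(X_t,\mu_t)$, the bracket $\sigma^2(X_t,\mu_t)$, and the jump compensator $f(X_t,\mu_t)\,\rho\circ\Phi(X_t,\mu_t,\cdot)^{-1}$), using continuity of the coefficients and the Skorohod representation theorem to upgrade the convergence. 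The main difficulty I expect is exactly this step~(iii) combined with the subsequence issue in step~(ii): since we only have convergence in law, one must work with the martingale-problem / characteristics formulation rather than a pathwise limit, and handle the discontinuity of $x\mapsto\uno{z\le f(x,m)}$ carefully (the a priori exponential-moment estimates again guard against mass escaping to infinity). Once a weak solution is constructed and pathwise uniqueness is known, the generalized Yamada--Watanabe theorem yields the unique strong solution, completing the proof.
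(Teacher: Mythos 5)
Your overall architecture coincides with the paper's (a priori exponential-moment estimates, pathwise uniqueness via Osgood, weak existence by a Picard scheme plus tightness and identification through semimartingale characteristics, then the generalized Yamada--Watanabe theorem of Kurtz), but the way you implement the truncation in the uniqueness step has a genuine gap. You localize with the exit time $\tau_R$ and work with $g(t)=\esp{\sup_{s\le t\wedge\tau_R}|X_s-Y_s|}$; this does not mesh with the McKean--Vlasov term, because $W_1(\mu_s,\nu_s)\le\esp{|X_s-Y_s|}$ involves the \emph{unstopped} processes and is not dominated by $g(s)$. Moreover, the a priori bounds one can use here are $\sup_{s\le T}\esp{e^{a|X_s|}}<\infty$ (pointwise in time) and $\esp{\sup_{s\le T}|X_s|}<\infty$, so $\pro{\tau_R\le T}$ is only $O(1/R)$, not exponentially small; if you then ``optimize over $R$'' you land on a modulus of the form $v^\alpha$ with $\alpha<1$, for which $\int_0 dv/\mu(v)<\infty$ and Osgood's lemma yields no contradiction. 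The paper's mechanism, which is the actual crux, is different: truncate \emph{inside the expectation}, writing $(1+|x|+|y|)|x-y|\le(1+2R_s)|x-y|+(1+|x|+|y|)|x-y|(\uno{|x|>R_s}+\uno{|y|>R_s})$, bound the tail terms at each fixed time by Cauchy--Schwarz, Markov and the pointwise exponential moments (giving $C_Te^{-aR_s}$), and then choose the level adaptively, $r_s=-\ln v(s)$, so that the inequality becomes $v(t)\le c-C_T\int_0^t v(s)\ln v(s)\,ds$, i.e. the Osgood modulus $-v\ln v$, and the contradiction $M(0)=\infty$. One must also restrict to $T=1/(16L^2)$ so that the BDG contribution $2L\sqrt{t}\,u(t)$ can be absorbed into the left-hand side, and then iterate in time; your sketch with a ``constant $C_R$'' followed by ``optimizing over $R$'' asserts the Osgood conclusion without supplying this mechanism.

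In the existence part, your steps (i) and (iii) match the paper (Aldous' criterion; identification of the limit through its characteristics, Skorohod representation, Theorem~IX.2.4 of Jacod--Shiryaev, and the Ikeda--Watanabe representation theorems to recover $W$ and $\pi$), but your step (ii) --- the control of consecutive Picard increments --- is exactly where the difficulty lies and you leave it as a wish. The paper obtains it by running the same adaptive truncation on $u^{[k]}(t)=\esp{\sup_{s\le t}|X^{[k+1]}_s-X^{[k]}_s|}$, summing the resulting inequalities over $k\le n$, applying Osgood to the partial sums $S_n$, which gives $S_n(T)\le C_T\,n^{1-e^{-C_TT}}$, and then invoking an elementary series lemma to extract a subsequence along which $u^{[n]}(T)\to0$; this is precisely what allows the weak limit of $(X^{[n]},X^{[n+1]})$ to be identified as $(X,X)$, hence the limit law as a fixed point. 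Without a quantitative statement of this type, tightness alone does not give the fixed-point property. A minor further point: in the paper the Picard step is an explicit stochastic integral of functionals of $X^{[n]}$ (nothing to solve), whereas your variant, freezing only the law, requires a separate well-posedness argument for each step; it is doable by standard localization since the frozen equation has no law dependence, but it adds work without simplifying the limit passage.
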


The rest of this section is dedicated to prove Theorem~\ref{wellposedmckeangeneral}.

\subsection{A priori estimates for equation~\eqref{mckeangeneral}}

In this section, we prove the following a priori estimates for the solutions of the SDE~\eqref{mckeangeneral}.

\begin{lem}\label{apriorimckeangeneral}
Grant Assumption~\ref{hypmoment}. Any solution $(X_t)_{t\geq 0}$ of~\eqref{mckeangeneral} satisfies for all $t> 0:$
\begin{itemize}
\item[(i)] there exists some $0<\alpha_t<a$ (with $a$ the same constant as in Assumption~\ref{hypmoment}) such that
$$\underset{0\leq s\leq t}{\sup}\esp{e^{\alpha_t|X_s|}}<\infty,$$ 
\item[(ii)] $\esp{\underset{0\leq s\leq t}{\sup}|X_s|}<\infty.$
\end{itemize}
\end{lem}

\begin{proof}
{
Let us prove~$(i).$ In a first time, we show that for any $n\in\n, t\geq 1,$
\begin{equation}\label{controlXtn}
\esp{|X_t|^n} \leq \ll(\esp{|X_0|^n} + C t n^{n/2 +1}\rr) e^{C n t},
\end{equation}
where $C$ is positive constant independent of $n.$

For $n=0$, there is nothing to prove, and for $n=1,$ the following estimate is classical with standard techniques: for all $t>0,$
$$\esp{|X_t|}\leq \ll(\esp{|X_0|} + C(t+\sqrt{t})\rr) e^{C(t+\sqrt{t})},$$
where $\sqrt{t}$ comes from Burkholder-Davis-Gundy's inequality. Then, assuming that $t\geq 1,$ we have $\sqrt{t}\leq t,$ whence~\eqref{controlXtn} holds true for~$n=1.$

Let $n\geq 2.$ Let
$$\psi_n(x) := |x|^n.$$

Note that $\psi_n$ is $C^2,$ and that, for any $x\in\r,$
$$|\psi'_n(x)| = n|x|^{n-1}\textrm{ and }|\psi''_n(x)| = n(n-1)|x|^{n-2}.$$

By Ito's formula, for any $t>0,$
\begin{align*}
\psi_n(X_t)=& \psi_n(X_0) +\int_0^t \psi_n'(X_s)b(X_s,\mu_s)ds + \int_0^t \psi'_n(X_s)\sigma(X_s,\mu_s)dW_s + \frac{1}{2}\int_0^t \psi''_n(X_s)\sigma(X_s,\mu_s)^2ds\\
& + \int_{[0,t]\times\r_+} \ll[\psi_n\ll(X_{s-} + \Phi(X_{s-},\mu_{s-},u)\rr)- \psi_n(X_{s-})\rr] \uno{z\leq f(X_{s-},\mu_{s-})}d\pi(s,z,u).
\end{align*}
Then, for any fixed $M>0,$ introducing $\tau_M := \inf\{t>0 : |X_t|>M\},$ we have
\begin{align}
\esp{|X_{t\wedge\tau_M}|^n}\leq& \esp{|X_0|^n} + n\int_0^t \esp{|X^{n-1}_{s\wedge\tau_M} b(X_{s\wedge\tau_M},\mu_{s\wedge\tau_M})|}ds\nonumber\\
&+ \frac{n(n-1)}{2}\int_0^t \esp{|X^{n-2}_{s\wedge\tau_M} \sigma(X_{s\wedge\tau_M},\mu_{s\wedge\tau_M})^2|}ds\nonumber\\
&+\int_0^t \esp{\int_\r \ll|\ll[\ll|X_{s\wedge\tau_M} + \Phi(X_{s\wedge\tau_M},\mu_{s\wedge\tau_M},u)\rr|^n - |X_{s\wedge\tau_M}|^n\rr]\rr|f(X_{s\wedge\tau_M},\mu_{s\wedge\tau_M})d\rho(u)}ds\nonumber\\
\leq& \esp{|X_0|^n} + Cnt + Cn\int_0^t \esp{|X_{s\wedge\tau_M}|^n}ds + Cn(n-1)\int_0^t\esp{|X_{s\wedge\tau_M}|^{n-2}}ds,\label{controlaux}
\end{align}
where we have used Assumption~\ref{hypmoment} and the fact that, for any $x\in\r,$ $|x|^{n-1} + |x|^n\leq 1 + 2|x|^n$ to control the drift term.

One problem in~\eqref{controlaux} is the $n(n-1)$ in the last term. Applying Gr\"onwall's lemma at this step (by bounding $|X_{s\wedge\tau_M}|^{n-2}$ by $1+|X_{s\wedge\tau_M}|^{n}$) would lead to obtain a term $n(n-1)$ in the exponential of~\eqref{controlXtn}. To avoid this problem, we use that for any $x\geq 0,n\in\n^*,$
$$x^{n-2} = x^{n-2}\cdot\uno{x^2\leq n} + x^{n-2}\cdot\uno{x^2>n}\leq n^{(n-2)/2} + x^n\cdot x^{-2}\cdot\uno{x^2>n} \leq n^{(n-2)/2} + \frac{x^n}{n},$$

we deduce from the previous computation that
\begin{equation}\label{xpuisn}
\esp{|X_{t\wedge\tau_M}|^n}\leq \esp{|X_0|^n} + C t n^{n/2 + 1} + Cn\int_0^t \esp{|X_{s\wedge\tau_M}|^n}ds.
\end{equation}

By Gr\"onwall's lemma, for any $t>0,$
$$\esp{|X_{t\wedge\tau_M}|^n} \leq \ll(\esp{|X_0|^n} + C t n^{n/2 +1}\rr) e^{C n t}.$$

As the bound above does not depend on~$M$, this implies that $\tau_M$ goes to infinity almost surely as $M$ goes to infinity. Whence Fatou's lemma allows to prove~\eqref{controlXtn}.

Now, let us show the point~$(i)$. Let $t\geq 1$ be fixed, and let us consider some $\alpha_t>0$ whose value will be chosen later. By~\eqref{controlXtn}, for any $s\leq t,$
\begin{align}
\esp{e^{\alpha_t |X_s|}}=& \sum_{n=0}^\infty \frac{\alpha_t^n}{n!}\esp{|X_s|^n}\leq \sum_{n=0}^\infty\frac{1}{n!}\alpha_t^n e^{Cns}\esp{|X_0|^n} + Cs\sum_{n=0}^\infty \frac{\alpha_t^n}{n!} n^{n/2 +1} e^{Cns}\nonumber\\
\leq& \esp{\sum_{n=0}^\infty\frac{1}{n!}\ll(\alpha_t e^{Cs} |X_0|\rr)^n} + Cs\sum_{n=0}^\infty\frac{\alpha_t^n}{n!}n^{n/2 + 1} e^{Cns}\nonumber\\
\leq & \esp{\exp\ll(\alpha_t e^{Cs} |X_0|\rr)} + C t\sum_{n=0}^\infty\frac{\alpha_t^n}{n!}n^{n/2 + 1} e^{Cnt}\label{eaxt}
\end{align}

The first sum in the last line above is finite provided that $\alpha_t < ae^{-Ct}.$ To see that the second sum is finite, it is sufficient to use D'Alembert's criterion: let
$$u_n = \frac{\alpha_t ^n}{n!} n^{n/2 + 1} e^{C n t},$$
and remark that
$$\frac{u_{n+1}}{u_n} = \alpha_t \frac{1}{n+1} e^{C t} \frac{(n+1)^{n/2 +1 +1/2}}{n^{n/2+1}} = \alpha_t e^{Ct} (n+1)^{-1/2} \ll(1+\frac1n\rr)\sqrt{\ll(1+\frac1n\rr)^n}\underset{n\rightarrow\infty}{\longrightarrow}0.$$

Then, the point~$(i)$ of the lemma is a consequence of~\eqref{eaxt}.
	
To prove the point~$(ii)$, let us use the following bound, which is a direct consequence of the form of the SDE~\eqref{mckeangeneral},
\begin{multline*}
\esp{\underset{0\leq s\leq t}{\sup}|X_s|}\leq \esp{|X_0|} + \int_0^t \esp{|b(X_s,\mu_s)|}ds + \esp{\underset{0\leq s\leq t}{\sup}\ll|\int_0^s \sigma(X_t,\mu_r)dW_r\rr|}\\
+ \int_0^t\esp{\int_E|\Phi(X_s,\mu_s,u)|f(X_s,\mu_s)d\rho(u)}ds.
\end{multline*}

Then using Burkholder-Davis-Gundy's inequality and Assumption~\ref{hypmoment}, we have
$$\esp{\underset{0\leq s\leq t}{\sup}|X_s|}\leq \esp{|X_0|} + C(t+\sqrt{t}) + C\int_0^t \esp{|X_s|}ds.$$

Finally, the point~$(ii)$ follows from the point~$(i)$ of the lemma.
}
\end{proof}

\subsection{Pathwise uniqueness for equation~\eqref{mckeangeneral}}
\label{uniquenessmckeangeneral}


\begin{prop}\label{unicitemckeangeneral}
Grant Assumptions~\ref{hypmoment} and~\ref{hypnonlip}. The pathwise uniqueness property holds true for~\eqref{mckeangeneral}.
\end{prop}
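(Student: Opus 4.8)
The plan is to consider two solutions $(X_t)$ and $(Y_t)$ of~\eqref{mckeangeneral} driven by the same Brownian motion $W$, the same Poisson measure $\pi$, and with the same initial condition $X_0 = Y_0$, and to show $X \equiv Y$ almost surely. Write $\mu_t = \mathcal{L}(X_t)$ and $\nu_t = \mathcal{L}(Y_t)$. The natural quantity to control is $g(t) := \esp{\sup_{0\leq s\leq t}|X_s - Y_s|}$; note that by Lemma~\ref{apriorimckeangeneral}(ii) this is finite, and that $W_1(\mu_s,\nu_s) \leq \esp{|X_s - Y_s|} \leq g(s)$. First I would subtract the two equations, take absolute values, and apply Burkholder--Davis--Gundy to the stochastic integral term and the compensator bound to the jump term. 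Since the local Lipschitz bound in Item~1 of Assumption~\ref{hypnonlip} and the global Lipschitz bound in Item~2 produce a factor $1 + |X_s| + |Y_s| + \int e^{a|x|}d\mu_s(x) + \int e^{a|x|}d\nu_s(x)$, I cannot close the estimate directly; this is where the truncation argument enters.

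The truncation works as follows. For $R>0$ let $\tau_R := \inf\{t>0 : |X_t| > R \text{ or } |Y_t| > R\}$. On the event $\{s \leq \tau_R\}$ the running factor $1 + |X_s| + |Y_s|$ is bounded by $1 + 2R$, while the measure terms $\int e^{a|x|}d\mu_s(x)$ and $\int e^{a|x|}d\nu_s(x)$ are bounded, \emph{uniformly on $[0,t]$ and independently of $R$}, by Lemma~\ref{apriorimckeangeneral}(i). Hence, setting $g_R(t) := \esp{\sup_{0\leq s\leq t\wedge\tau_R}|X_s - Y_s|}$, the estimate collapses to
\begin{equation*}
g_R(t) \leq C_R \int_0^t \bigl(g_R(s) + W_1(\mu_s,\nu_s)\bigr)\,ds \leq C_R \int_0^t \esp{\sup_{0\leq r\leq s}|X_{r\wedge\tau_R} - Y_{r\wedge\tau_R}|}\,ds + C_R\int_0^t W_1(\mu_s,\nu_s)\,ds,
\end{equation*}
for a constant $C_R$ depending on $R$ (through $1+2R$), $L$, $a$, the BDG constant, and $\sup_{s\leq t}\esp{e^{a|X_s|}}$, $\sup_{s\leq t}\esp{e^{a|Y_s|}}$. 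The subtle point is that $W_1(\mu_s,\nu_s)$ is \emph{not} a truncated quantity, so I must bound it by $\esp{|X_s - Y_s|}$ rather than by $\esp{|X_{s\wedge\tau_R} - Y_{s\wedge\tau_R}|}$ — this is handled because one can also take the $\tau_R$ in the definition to be common and then pass $R \to \infty$ at the very end, or alternatively one runs the argument with a slightly more careful bookkeeping of the unstopped marginals using the a priori bound again. This is the step I expect to be the main obstacle: marrying the truncation (which is pathwise) with the Wasserstein distance of the marginals (which is not), while keeping the resulting inequality in a form to which a Grönwall/Osgood-type argument applies.

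Once the inequality above is in hand, since the coefficients here are genuinely Lipschitz on the truncated region (the local Lipschitz constant is now a true constant $C_R$), ordinary Grönwall's lemma gives $g_R(t) = 0$ for all $t$, hence $X_{t\wedge\tau_R} = Y_{t\wedge\tau_R}$ a.s.; in particular $\mu_{t\wedge\tau_R}$ and $\nu_{t\wedge\tau_R}$ coincide on that event. Finally, since $X_0 = Y_0$ and both processes have paths that are a.s.\ finite on compacts (again Lemma~\ref{apriorimckeangeneral}(ii) together with càdlàg paths), $\tau_R \uparrow \infty$ almost surely as $R \to \infty$, so letting $R \to \infty$ yields $X_t = Y_t$ for all $t$, a.s., which is pathwise uniqueness. (If one prefers to avoid the delicate handling of the marginals and instead wants Osgood rather than Grönwall here, one can drop the truncation on the factor $1+|X_s|+|Y_s|$ as well and keep the genuinely concave modulus $\omega(r) = r(1 + \text{const} - \log r)$ or similar — but since on the truncated region the constant is honest, plain Grönwall suffices and Osgood is reserved for the existence part.)
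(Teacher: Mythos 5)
There is a genuine gap, and it is precisely at the point you flag as ``the main obstacle'' and then defer: the law-dependent term cannot be localized by your stopping time, and because of that plain Gr\"onwall does \emph{not} give $g_R(t)=0$. With $\tau_R:=\inf\{t:|X_t|>R\text{ or }|Y_t|>R\}$ your estimate has the form
\begin{equation*}
g_R(t)\;\leq\; C_R\int_0^t g_R(s)\,ds \;+\; C_R\int_0^t W_1(\mu_s,\nu_s)\,ds ,
\end{equation*}
where $W_1(\mu_s,\nu_s)$ compares the \emph{unstopped} marginals and is only known to be $\leq g(s)$, not $\leq g_R(s)$. Gr\"onwall then yields $g_R(t)\leq C_R e^{C_R t}\int_0^t W_1(\mu_s,\nu_s)\,ds$, which is not zero unless you already know what you are trying to prove. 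If you repair this by splitting $\esp{|X_s-Y_s|}\leq g_R(s)+\esp{(|X_s|+|Y_s|)\uno{\tau_R<s}}$, the tail term is an $R$-dependent constant $\eps_R>0$ (of order $R^{-1/2}$ with the moment bounds of Lemma~\ref{apriorimckeangeneral}(ii), since only $\esp{\sup_{s\leq t}|X_s|}<\infty$ is available for the running supremum), and Gr\"onwall gives $g_R(t)\leq C_R t\,\eps_R e^{C_R t}$ with $C_R\sim 1+2R$. This does not vanish for fixed $R$, and letting $R\to\infty$ fails because $e^{C_R t}$ blows up much faster than $\eps_R$ decays. So neither step of your conclusion --- ``$g_R\equiv 0$ for each $R$'' and ``let $R\to\infty$'' --- goes through. (A secondary issue: the BDG bound for the Brownian part produces a term of the form $2L\sqrt t\,g(t)$, not an integral $\int_0^t g$, so it must be absorbed by restricting to a small interval, as the paper does with $T=1/(16L^2)$; your $C_R\int_0^t(\cdots)ds$ form glosses over this.)

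The paper's proof avoids exactly this trap. It does not stop the processes; it truncates \emph{inside the expectation} via $(1+|x|+|y|)|x-y|\leq(1+2R_s)|x-y|+(1+|x|+|y|)|x-y|(\uno{|x|>R_s}+\uno{|y|>R_s})$ with a \emph{time-dependent} level $R_s$, controls the indicator terms by Markov's inequality and the exponential moments of Lemma~\ref{apriorimckeangeneral}(i) (giving $e^{-aR_s}$, not $R^{-1/2}$), and arrives at $u(t)\leq C_T\int_0^t[(1+r_s)u(s)+e^{-r_s}]ds$ after absorbing the $2L\sqrt T u(t)$ term. The crucial idea is then to choose the truncation level as a function of the unknown error itself, $r_s:=-\ln v(s)$ with $v=u/(De^2)$, which turns the inequality into $v(t)\leq c-2C_T\int_0^t v(s)\ln v(s)\,ds$; since the modulus $\mu(v)=-v\ln v$ satisfies Osgood's condition, Lemma~\ref{osgood} forces $v\equiv 0$ on $[0,T]$, and one iterates over intervals of length $T$. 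In other words, Osgood is not ``reserved for the existence part'': with only a locally Lipschitz drift/jump coefficient and a law-dependent term, no choice of fixed truncation level lets ordinary Gr\"onwall close the argument, and the coupled choice of $r_s$ plus Osgood is the missing ingredient in your plan.
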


\begin{proof}
Let $(\hat X_t)_{t\geq 0}$ and $(\check X_t)_{t\geq 0}$ be two solutions of~\eqref{mckeangeneral} defined w.r.t. the same initial condition~$X_0,$ the same Brownian motion~$W$ and the same Poisson measure~$\pi$. The proof consists in showing that the function
$$u(t) := \esp{\underset{0\leq s\leq t}{\sup}|\hat X_s-\check X_s|}$$
is zero. This choice of function~$u$ is inspired of the proof of Theorem~2.1 of \cite{graham_mckean-vlasov_1992}. {It allows to treat equations with both a jump term and a Brownian term since the classical $L^1$ norm would cause problems with the Brownian term, and the classical $L^2$ norm with the jump term.}

We know that, for all $t\geq 0,$ $u_t<\infty$ by Lemma~\ref{apriorimckeangeneral}$.(ii).$

Writing $\hat\mu_t := \mathcal{L}(\hat X_t)$ and $\check\mu_t:=\mathcal{L}(\check X_t),$ we have
\begin{multline*}
\ll|\hat X_s - \check X_s\rr|\leq \int_0^s |b(\hat X_r,\hat\mu_r) - b(\check X_r,\check\mu_r)|dr + \ll|\int_0^s (\sigma(\hat X_r,\hat\mu_r) - \sigma(\check X_r,\check\mu_r))dW_r\rr|\\
+ \int_{[0,s]\times\r_+\times\r} \ll|\Phi(\hat X_{r-},\hat\mu_{r-},u)\uno{z\leq f(\hat X_{r-},\hat\mu_{r-})}-\Phi(\check X_{r-},\check\mu_{r-},u)\uno{z\leq f(\check X_{r-},\check\mu_{r-})}\rr|d\pi(r,z,u).
\end{multline*}

This implies that
\begin{multline}\label{osgoodutpre}
u(t)\leq  L\esp{\ll(\int_0^t \ll(|\hat X_s - \check X_s| + W_1(\hat\mu_s,\check\mu_s)\rr)^2ds\rr)^{1/2}} \\
+2L\int_0^t\esp{\ll(1+|\hat X_s|+|\check X_s|+\int_\r |x|^{k_0}d\hat\mu_s(x) + \int_\r |x|^{k_0}d\check\mu_s(x)\rr)\ll(|\hat X_s - \check X_s| + W_1(\hat\mu_s,\check\mu_s)\rr)}ds
\end{multline}
where we have used Burkholder-Davis-Gundy's inequality to deal with the Brownian term that corresponds to the term at the first line above. The term at the second line corresponds to the controls of the drift term and the jump term.

Now let us fix some $T>0$ (later $T$ will be fixed at $T=1/(16L^2)$). By Lemma~\ref{apriorimckeangeneral}$.(i)$, we have,
\begin{equation*}
\underset{0\leq s\leq T}{\sup}\int_\r e^{\alpha_T|x|}d\hat\mu_s(x) + \underset{0\leq s\leq T}{\sup}\int_\r e^{\alpha_T|x|}d\check\mu_s(x)\leq C_T<\infty,
\end{equation*}
whence
$$\underset{0\leq s\leq T}{\sup}\int_\r |x|^{k_0}d\hat\mu_s(x) + \underset{0\leq s\leq T}{\sup}\int_\r |x|^{k_0}d\check\mu_s(x)\leq C_T<\infty.$$

And, from the definition of $W_1,$ we have the following bound
$$W_1(\hat\mu_s,\check\mu_s)\leq \esp{\ll|\hat X_s-\check X_s\rr|}\leq u(s).$$

Then, \eqref{osgoodutpre} and Lemma~\ref{apriorimckeangeneral} imply that, for all $0\leq t\leq T,$
\begin{align*}
u(t)\leq& \int_0^t\esp{\ll(1+|\hat X_s|+|\check X_s|+C_T\rr)\ll(|\hat X_s - \check X_s| + u(s)\rr)}ds \\
&+ L\esp{\ll(\int_0^t \ll(|\hat X_s - \check X_s| + u(s)\rr)^2ds\rr)^{1/2}}\\
\leq& C_T\int_0^t\esp{\ll(1+|\hat X_s| + |\check X_s|\rr)\ll(|\hat X_s-\check X_s|+u(s)\rr)}ds + 2L\sqrt{t}u(t)\\
\leq& C_T\int_0^t\esp{\ll(1+|\hat X_s| + |\check X_s|\rr)\ll(|\hat X_s-\check X_s|\rr)}ds+C_T\int_0^t u(s)ds + 2L\sqrt{t}u(t),
\end{align*}
where we have bounded the second integral of the RHS of the first inequality above by $t$ times the supremum of the integrand. Note that the value of $C_T$ changes from line to line.

Now, to end the proof, we have to control a term of the type $(1+|x|+|y|)|x-y|.$ To do so, we use a truncation argument based on the following inequality: for all $x,y\in\r,R>0,$
$$(1+|x|+|y|)|x-y|\leq (1+2R)|x-y| + (1+|x|+|y|)|x-y|\ll(\uno{|x|>R}+\uno{|y|>R}\rr).$$

Let $R:s\mapsto R_s>0$ be the truncation function whose values will be chosen later. By Lemma~\ref{apriorimckeangeneral}, for any $0\leq s\leq T,$
$$\esp{\ll(1+|\hat X_s|+|\check X_s|\rr)\ll|\hat X_s-\check X_s\rr|}\leq (1+2R_s)u(s) + C_T\sqrt{\pro{|\hat X_s|>R_s}} + C_T\sqrt{\pro{|\check X_s|>R_s}}.$$

The exponential moments proven in Lemma~\ref{apriorimckeangeneral}$.(i)$ are used to control the two last term above. Indeed, by Markov's inequality
$$\pro{|\hat X_s|>R_s} + \pro{|\check X_s|>R_s}\leq C_Te^{-\alpha_T R_s}.$$

Consequently, defining $r_s := \alpha_T R_s/2$, for any $0\leq t\leq T,$
$$u(t)\leq C_T\int_0^t \ll[(1+r_s)u(s) + e^{-r_s}\rr]ds + 2L\sqrt{t}u(t).$$

Now, let $T=1/(16L^2)$ such that $2L\sqrt{T}\leq 1/2.$ Then, we can rewrite the above inequality as, for all $t\in[0,T],$
$$u(t)\leq C_T\int_0^t\ll[(1+r_s)u(s) + e^{-r_s}\rr]ds.$$

Let us prove by contradiction that, for all $t\leq T,$ $u(t)=0.$ To do so, let $t_0:=\inf\{t>0:u(t)>0\}$ and assume that $t_0<T.$ Notice that, as $u$ is non-decreasing, this implies that, for all $t\in[0,t_0[,$ $u(t)=0.$ In particular, for all $t\in[t_0,T],$
$$u(t)\leq C_T\int_{t_0}^t\ll[(1+r_s)u(s) + e^{-r_s}\rr]ds.$$

Besides $u(t)$ is finite and bounded (see Lemma~\ref{apriorimckeangeneral}$.(ii)$) on $ [0, T ], $ say by a constant $ D>1.$ Let $v(t):=u(t)/(De^{2})<e^{-2}.$ Obviously $v$ satisfies the same inequality as $u$ above. Now we define $r_s := -\ln v(s),$ so that, for all $t_0 < t \le T, $
$$v(t)\leq C_T\int_{t_0}^t(2-\ln v(s))v(s)ds\leq -2C_T\int_{t_0}^t v(s)\ln v(s)ds,$$
where we have used that $ 2 -  \ln v(s) \le - 2 \ln v(s) $ since $-\ln v(s)\geq 2.$

In particular, for any $c\in]0,e^{-2}[,$ for all $ t_0 \le t \le T,$ 
$$ v(t)\leq  c  -2C_T\int_{t_0}^t v(s)\ln v(s)ds.$$ 

Then, introducing $M: x\in]0,e^{-2}[\mapsto \int_x^{e^{-2}}\frac{1}{-s\ln s}ds,$ we may apply Osgood's lemma (see Lemma~\ref{osgood}) with $ \gamma \equiv 2 C_T $ and $ \mu (v) = (- \ln v) v $ to obtain that 
$$-M(v(T)) + M(c)\leq \int_{t_0}^T 2C_Tds  = 2C_T(T-t_0)$$
or equivalently, 
$$ M(c) \le M( v(T) ) + 2 C_T T .$$ 
Recalling that we assumed $v(T)>0$ such that the right hand side of the above equality is finite,  if we let $c$ tend to 0, we obtain
$$M(0) = \int_0^{e^{-2}}\frac{1}{-s\ln s}ds\leq \int_{v(T)}^{e^{-2}}\frac{1}{-s\ln s}ds + 2C_TT <\infty,$$
which is absurd since $M(0)  = \infty .$

A consequence of the above considerations is that for all $t\in[0,T]$, $u(t)=0.$ Recalling the definition of $u$, we have proven that the processes $\hat X$ and $\check X$ are equal on~$[0,T]$.

We can repeat this argument on the interval $[T,2T]$  and iterate up to any finite time interval $[0,T_0]$ since $T=1/(16L^2)$ does only depend on the coefficients of the system but not on the initial condition. This proves the pathwise-uniqueness property for the McKean-Vlasov equation~\eqref{mckeangeneral}.
\end{proof}

Let us complete our previous Remark~\ref{remhypmckeangeneral}.

\begin{rem}\label{rem2hypmckeangeneral}
The adaptation suggested in Remark~\ref{remhypmckeangeneral} is the following: in the proof of Proposition~\ref{unicitemckeangeneral} above, one has to replace the distance $\esp{\sup_{s\leq t}|\hat X_s-\check X_s|}$ by $\esp{(\hat X_t-\check X_t)^2},$ and one has to do similar changes in the proof of Proposition~\ref{propexistenceweaksolutionmckeangeneral} below.
\end{rem}

\subsection{Existence of a weak solution of equation~\eqref{mckeangeneral}}
\label{existencemckeangeneral}

Before proving the existence of solution of~\eqref{mckeangeneral}, let us state an elementary lemma about series whose proof is postponed to the Appendix.

\begin{lem}
\label{seriecv0}
Let $(u_n)_{n\geq 0}$ be a sequence of non-negative real numbers, and $S_n = \sum_{k=0}^nu_k$ ($n\in\n$). If the sequence $S_n/n$ vanishes, then there exists a subsequence of $(u_n)_{n\geq 0}$ that converges to $0.$
\end{lem}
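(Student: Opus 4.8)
The plan is to argue by contradiction. Suppose no subsequence of $(u_n)_n$ converges to $0$; then $\liminf_{n\to\infty} u_n > 0$, so there exist $\delta>0$ and $N_0\in\n$ such that $u_n\geq\delta$ for all $n\geq N_0$. In that case, for every $n\geq N_0$,
$$S_n = \sum_{k=0}^{N_0-1}u_k + \sum_{k=N_0}^n u_k \geq (n-N_0+1)\delta,$$
since all terms are non-negative. Hence $S_n$ grows at least linearly in $n$, which is incompatible with the hypothesis $S_n\leq Cn^{1-\eps}$ for a fixed $\eps\in(0,1)$: dividing by $n$, we would get $\delta \leq S_n/n \leq C n^{-\eps} \to 0$ as $n\to\infty$, a contradiction. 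Therefore $\liminf_{n\to\infty}u_n = 0$, and by definition of the $\liminf$ one can extract a subsequence $(u_{n_k})_k$ converging to $0$.

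Concretely, the extraction at the end can be made explicit: since $\liminf u_n = 0$, for each $k\in\n^*$ there is an index, which we may choose strictly increasing in $k$, with $u_{n_k} < 1/k$; this subsequence then converges to $0$.

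I do not expect any real obstacle here: the statement is elementary, the only ingredients being non-negativity of the $u_n$ (to lower-bound a partial sum by a tail sum) and the sublinear growth $n^{1-\eps} = o(n)$. The one point to state carefully is simply that "no subsequence tends to $0$" is equivalent to "$\liminf u_n > 0$", which holds for any bounded-below sequence of non-negative reals and is what licenses the uniform lower bound $u_n\geq\delta$ for large $n$.
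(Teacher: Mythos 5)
Your argument is correct, and it takes a more elementary route than the paper's. The paper does not pass through $\liminf u_n$: it constructs the subsequence explicitly, choosing indices $n_k$ with $u_{n_k}\leq C n_k^{-\eps/2}$ and showing by contradiction that such indices exist beyond any $n_k$, because otherwise the tail would force $S_n\gtrsim n^{1-\eps/2}$, which beats the assumed bound $Cn^{1-\eps}$ since $1-\eps<1-\eps/2$. This yields a quantitative decay rate along the subsequence, though only the qualitative convergence to $0$ is used later (Step~3 of the existence proof), so your version suffices for the paper's purposes. Your proof, by contrast, only exploits that $n^{1-\eps}=o(n)$: if no subsequence tends to $0$ then $\liminf u_n>0$, hence $u_n\geq\delta$ for $n\geq N_0$ and $S_n\geq (n-N_0+1)\delta$ grows linearly, contradicting sublinearity. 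That equivalence between ``no subsequence tends to $0$'' and ``$\liminf u_n>0$'' is indeed valid for non-negative sequences, as you note. One cosmetic point: the inequality ``$\delta\leq S_n/n$'' is not literally true for each $n$ (you only have $S_n/n\geq (n-N_0+1)\delta/n$), but since the right-hand side tends to $\delta$ while $S_n/n\leq Cn^{-\eps}\to 0$, the contradiction stands; phrasing it via $\liminf_n S_n/n\geq\delta$ would make this airtight. In short: your argument is simpler and fully adequate for the statement; the paper's argument is slightly heavier but delivers an explicit rate $u_{n_k}\leq Cn_k^{-\eps/2}$, which is strictly more information than the lemma claims.
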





The aim of this section is to construct a weak solution of the McKean-Vlasov equation~\eqref{mckeangeneral}, using a Picard iteration. The idea of the proof is to show that this scheme converges to a solution of~\eqref{mckeangeneral}. However, because of our locally Lipschitz conditions, we cannot prove it directly. Instead, we prove that a subsequence converges in distribution by tightness. That is why, in a first time, we only construct a weak solution.

\begin{prop}
\label{propexistenceweaksolutionmckeangeneral}
Grant Assumptions~\ref{hypmoment} and~\ref{hypnonlip}. There exists a weak solution of~\eqref{mckeangeneral} on $[0,T],$ with $T=1/(16L^2).$
\end{prop}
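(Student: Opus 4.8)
The plan is to build the weak solution via a Picard iteration scheme and extract a convergent-in-distribution subsequence, following the strategy already announced in the introduction. Set $X^0_t \equiv X_0$, and inductively define $X^{n+1}$ as the solution of the (non-McKean) SDE
\[
dX^{n+1}_t = b(X^{n+1}_t,\mu^n_t)\,dt + \sigma(X^{n+1}_t,\mu^n_t)\,dW_t + \int_{\r_+\times E}\Phi(X^{n+1}_{t-},\mu^n_{t-},u)\uno{z\leq f(X^{n+1}_{t-},\mu^n_{t-})}\,d\pi(t,z,u),
\]
where $\mu^n_t := \mathcal L(X^n_t)$ is \emph{frozen} from the previous step. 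Each such equation has genuinely locally Lipschitz coefficients in the space variable only (the measure argument being fixed), so classical SDE theory gives existence and uniqueness of a strong solution up to explosion; the boundedness conditions of Assumption~\ref{hypnonlip}, reused exactly as in the proof of Lemma~\ref{apriorimckeangeneral}, prevent explosion and yield the uniform a priori bounds
\[
\sup_n\ \sup_{0\leq s\leq T}\esp{e^{a|X^n_s|}} < \infty,\qquad \sup_n\ \esp{\sup_{0\leq s\leq T}|X^n_s|} < \infty,
\]
with constants depending only on the coefficients, $a$, $\esp{e^{a|X_0|}}$ and $T$. These bounds are the engine of everything that follows.

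Next I would run the same Osgood/truncation computation as in Proposition~\ref{unicitemckeangeneral}, but \emph{between consecutive iterates}: setting $u_n(t) := \esp{\sup_{s\leq t}|X^{n+1}_s - X^n_s|}$ and $W_1(\mu^n_s,\mu^{n-1}_s)\le u_{n-1}(s)$, the locally Lipschitz bound together with the uniform exponential moments and the truncation inequality $(1+|x|+|y|)|x-y|\le (1+2R)|x-y| + (1+|x|+|y|)|x-y|(\uno{|x|>R}+\uno{|y|>R})$ gives, on $[0,T]$ with $2L\sqrt T\le 1/2$, a recursive estimate of the shape
\[
u_n(t) \leq C_T\int_0^t\big[(1+r_s)\,u_n(s) + (1+r_s)\,u_{n-1}(s) + e^{-r_s}\big]\,ds.
\]
Here — and this is the key point that replaces the clean Gr\"onwall contraction of the Lipschitz case — one is not getting a contraction. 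Instead, one chooses the truncation level $r_s$ cleverly (e.g. as a slowly growing deterministic function of $n$, balancing the $e^{-r_s}$ term against the growth factor $1+r_s$), plays it against the $L^1$ bound $u_n(t)\le 2\esp{\sup_s|X^0\ \text{to}\ X^{n+1}|}\le$ const, and deduces that the partial sums $S_n := \sum_{k=0}^n u_k(T)$ grow only like $n^{1-\eps}$ for some $\eps\in(0,1)$. By Lemma~\ref{seriecv0}, a subsequence of $(u_n(T))_n$ — hence of $\big(\esp{\sup_{t\le T}|X^{n+1}_t - X^n_t|}\big)_n$ — tends to $0$. This is the step I expect to be the main obstacle: getting the $n^{1-\eps}$ growth rather than just boundedness requires a careful, quantitative choice of $r_s$ depending on $n$, and interpolating between the crude uniform $L^1$ bound and the Osgood-type differential inequality.

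With the sequence $(X^n)$ tight in $D([0,T],\r)$ (tightness follows from the uniform moment bounds of the first step via an Aldous-type criterion, exactly as one controls the modulus of continuity of the drift/Brownian/compensated-jump parts), extract a subsequence along which $(X^n, X^{n+1}, W, \pi)$ converges in distribution; by Skorohod representation realize this on one probability space. Lemma~\ref{cvdr2} and the fact that $\esp{\sup_t|X^{n+1}_t-X^n_t|}\to 0$ force the limits of $X^n$ and $X^{n+1}$ to coincide, call it $X$; in particular the frozen laws $\mu^n_t$ converge to $\mu_t := \mathcal L(X_t)$ in $W_1$ (using uniform integrability from the exponential moments). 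It remains to identify $X$ as a solution of~\eqref{mckeangeneral}. Following the plan in the introduction, this is done at the level of semimartingale characteristics: the characteristics of $X^{n+1}$ are built from $b(\cdot,\mu^n),\ \sigma(\cdot,\mu^n)$ and the jump kernel $\Phi(\cdot,\mu^n,\cdot)\uno{z\le f(\cdot,\mu^n)}$; passing to the limit — using continuity of the coefficients in both arguments, the convergence $\mu^n\to\mu$ in $W_1$, the a priori bounds for uniform integrability, and Lemma~\ref{cvdintegral} together with its Remark to handle the $dt$-integrals under time-changes — shows that $X$ has the characteristics associated with $b(\cdot,\mu),\sigma(\cdot,\mu),\Phi(\cdot,\mu,\cdot)\uno{z\le f(\cdot,\mu)}$. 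A standard representation theorem for semimartingales with given characteristics (on a possibly enlarged space carrying a Brownian motion and a Poisson measure) then produces $W$ and $\pi$ driving $X$, i.e. a weak solution of~\eqref{mckeangeneral} on $[0,T]$, which is what we wanted to prove.
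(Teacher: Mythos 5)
Your overall architecture is the same as the paper's (Picard scheme, uniform exponential-moment bounds, Aldous tightness, control of consecutive differences via truncation plus Osgood, Lemma~\ref{seriecv0}, Skorohod representation and Lemma~\ref{cvdr2} to get the same limit for $X^{n}$ and $X^{n+1}$, identification through semimartingale characteristics and a representation theorem for $W$ and $\pi$). But the step you yourself flag as ``the main obstacle'' is exactly the new idea of the paper, and your sketch of it does not close. The paper gets the bound $S_n(T)\leq C_T\,n^{1-\eps}$ not by choosing a truncation level that grows slowly in $n$ a priori, but by (i) summing the recursion over $k=0,\dots,n-1$ to obtain a closed inequality for the partial sum $S_n(t)$, (ii) normalizing $R_n(t):=S_n(t)/((n+1)D_Te^{2})\leq e^{-2}$, (iii) choosing the truncation \emph{self-referentially}, $r^{[n]}_s:=-\ln R_n(s)$, so that the inequality becomes $R_n(t)\leq \frac{1}{n+1}-C_T\int_0^t R_n(s)\ln R_n(s)\,ds$, and (iv) applying Osgood's lemma with modulus $\mu(v)=-v\ln v$, which yields $R_n(T)\leq (n+1)^{-e^{-C_TT}}$, i.e.\ $S_n(T)\leq C_T n^{1-e^{-C_TT}}$ for \emph{any} value of $C_TT$. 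By contrast, the route you hint at (a deterministic slowly growing choice such as $r_s=\alpha\ln n$, then Gr\"onwall) gives roughly $S_n(T)\leq C\bigl(n^{C_TT\alpha}+n^{1-\alpha(1-C_TT)}\bigr)$, which is of the form $n^{1-\eps}$ only if $C_TT<1$; since $T=1/(16L^2)$ is already fixed by the absorption of the Brownian term and $C_T$ contains the (uncontrolled) moment constants, this is not guaranteed, and ``interpolating with the crude uniform bound'' does not repair it. So the quantitative mechanism producing the $n^{1-\eps}$ growth is genuinely missing from your argument.

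A secondary deviation: you freeze only the measure $\mu^n$ and evaluate the coefficients at $X^{n+1}$, so each Picard step is itself an SDE with locally Lipschitz (in $x$) jump coefficients whose well-posedness you invoke as ``classical''; this is not free in the jump setting and would itself need an argument (localization plus non-explosion from boundedness). The paper sidesteps this entirely by freezing both the state and the measure at level $n$, so that $X^{[n+1]}$ is an \emph{explicit} stochastic integral of functionals of $X^{[n]}$ and no equation has to be solved at any step; this also makes the recursion one-sided ($u^{[k+1]}$ bounded by integrals of $u^{[k]}$ plus $2L\sqrt{T}\,u^{[k]}$), which is what makes the summation in $k$ clean. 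The remaining steps of your plan (tightness, $W_1$-convergence of the flows of laws with a common sequence of time-changes, limit characteristics via Lemma~\ref{cvdintegral}, and the Ikeda--Watanabe-type representation producing $W$ and $\pi$) do match the paper's Steps~2--6 in outline.
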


\begin{proof}
As in the proof of the pathwise uniqueness of Section~\ref{uniquenessmckeangeneral}, we work on a time interval~$[0,T]$ where $T>0$ is a number whose value can be fixed at $1/(16L^2)$.

{\it Step~1.} In this first step, we introduce the iteration scheme, and state its basic properties at~\eqref{aprioripicardmckeangeneral}. Let $X^{[0]}_t := X_0,$ and define the process $X^{[n+1]}$ from $\mu^{[n]}_t := \mathcal{L}(X^{[n]}_t)$ by

{
\begin{align}
X^{[n+1]}_t :=& X_0 + \int_0^t b(X^{[n+1]}_s,\mu^{[n]}_s)ds + \int_0^t\sigma(X^{[n+1]}_s,\mu^{[n]}_s)dW_s \label{Xnplus1}\\
&+ \int_{[0,t]\times\r_+\times\r}\Phi(X^{[n+1]}_{s-},\mu^{[n]}_{s-},u)\uno{z\leq f(X^{[n+1]}_{s-},\mu^{[n]}_{s-})}d\pi(s,z,u)\nonumber
\end{align}

We know that $X^{[n+1]}$ is well-defined since the above equation is a classical Ito-SDE (see for example Theorem~$III.2.32$ of \cite[p. 158]{jacod_limit_2003}). Indeed, since we assume that $X^{[n]}$ is well-defined, the function $t\mapsto \mu^{[n]}_t$ is a well-defined deterministic function that does not depend on~$X^{[n+1]}$. More precisely, we can write $b(x,\mu^{[n]}_s) =: \tilde b_n(s,x)$ and write similarly the other coefficients.
}

Then, using the same computations as in the proof of Lemma~\ref{apriorimckeangeneral}, we can prove that, for all $t\geq 0,$
\begin{equation}
\label{aprioripicardmckeangeneral}
\underset{n\in\n}{\sup}~\underset{0\leq s\leq t}{\sup}\esp{e^{\alpha_T |X^{[n]}_s|}}<\infty\textrm{ and }\underset{n\in\n}{\sup}~\esp{\underset{0\leq s\leq t}{\sup}|X^{[n]}_s|}<\infty.
\end{equation}

{ Note that, to obtain~\eqref{aprioripicardmckeangeneral} with the same reasoning as in the proof of Lemma~\ref{apriorimckeangeneral}, it is important in~\eqref{Xnplus1} to use $X^{[n+1]}$ in the coefficients of the SDE and not $X^{[n]}.$}

{\it Step~2.} Now let us show that $(X^{[n]},X^{[n+1]})_n$ has a converging subsequence in distribution in~$D([0,T],\r^2),$ by showing that it satisfies Aldous' tightness criterion:  
\begin{itemize}
\item[$(a)$] for all $\varepsilon >0$,
$ \lim_{ \delta \downarrow 0} \limsup_{N \to \infty } \sup_{ (S,S') \in A_{\delta,T}} 
P ( |X_{S'}^{[n]} - X_S^{[n]} | + |X_{S'}^{[n+1]} - X_S^{[n]}|> \varepsilon ) = 0$,
where $A_{\delta,T}$ is the set of all pairs of stopping times $(S,S')$ such that
$0\leq S \leq S'\leq S+\delta\leq T$ a.s.,
\item[$(b)$] $\lim_{ K \uparrow \infty } \sup_n 
\mathbb{P} ( \sup_{ t \in [0, T ] } |X_t^{[n]}| + |X^{[n+1]}_t| \geq K ) = 0$.
\end{itemize}

Assertion~$(b)$ is a straightforward consequence of~\eqref{aprioripicardmckeangeneral} and Markov's inequality. To check assertion~$(a)$, notice that, for any $(S,S')\in A_{\delta,T},$ by BDG inequality, Assumption~\ref{hypmoment} and~\eqref{aprioripicardmckeangeneral},
\begin{equation}\label{aldousmckeangeneral}
\esp{\ll|X^{[n+1]}_{S'}-X^{[n+1]}_S\rr|}\leq \delta \ll(1+\esp{\underset{0\leq s\leq T}{\sup}|X^{[n+1]}_s|}\rr) + ||\sigma||_{\infty}\sqrt{\delta}\leq (\delta + \sqrt{\delta}) C_T.
\end{equation}

Then, by tightness, there exists a subsequence of $(X^{[n]},X^{[n+1]})_n$ that converges in distribution to some~$(X,Y)$ in~$D([0,T],\r^2).$ In the rest of the proof, we work on this subsequence without writing it explicitly for the sake of notation.

{\it Step~3.} In this step, we show that $X=Y$ almost surely. Note that, since we work on a subsequence, this is not obvious. It is for this part of the proof that we need to restrict our processes to a time interval of the form~$[0,T].$ It is sufficient to prove that, for a subsequence,
\begin{equation}
\label{XnXnplusunmckeangeneral}
\esp{\underset{0\leq s\leq T}{\sup}\ll|X^{[n+1]}_s-X^{[n]}_s\rr|}
\end{equation}
vanishes as~$n$ goes to infinity. Indeed, \eqref{XnXnplusunmckeangeneral} implies that, for another subsequence, $\sup_{s\leq T}|X^{[n+1]}_s-X^{[n]}_s|$ converges to zero almost surely. Then, we can apply Skorohod representation theorem (see Theorem~6.7 of \cite{billingsley_convergence_1999}) to the following sequence
$$\left(X^{[n]},X^{[n+1]}\right)_n$$
that converges in distribution in $D([0,T],\r^2)$ to $(X,Y).$ Thus we can consider, for $n\in\n,$ random variables $(\tilde X^{[n]},\tilde X^{[n+1]})$ (resp. $(\tilde X,\tilde Y)$) having the same distribution as $(X^{[n]},X^{[n+1]})$ (resp. $(X,Y)$)  for which the previous convergence is almost sure. In particular, we also know that $\sup_{s\leq T}|\tilde X^{[n+1]}_s-\tilde X^{[n]}_s|$ vanishes almost surely. As a consequence $\tilde X=\tilde Y$ almost surely, and so $X=Y$ almost surely.


Now let us prove~\eqref{XnXnplusunmckeangeneral}. Let

$$u^{[n]}(t) := \esp{\underset{0\leq s\leq t}{\sup}\ll|X^{[n+1]}_s-X^{[n]}_s\rr|}.$$

By~\eqref{aprioripicardmckeangeneral},
$$\underset{n\in\n}{\sup}~u^{[n]}(t)<\infty.$$

Let us fix some $n\in\n^*$ and consider a truncation function $r^{[n]}_t>0$ whose values will be fixed later. The same truncation argument used in Section~\ref{uniquenessmckeangeneral} allows to prove that, for all $0\leq k\leq n-1, t\leq T,$
\begin{align*}
u^{[k+1]}(t)\leq& C_T\int_0^t \ll[(1+r^{[n]}_s)u^{[k]}(s) + e^{-r^{[n]}_s}\rr]ds + 2L\sqrt{T}u^{[k]}(t)\\
\leq& C_T\int_0^t \ll[(1+r^{[n]}_s)u^{[k]}(s) + e^{-r^{[n]}_s}\rr]ds + \frac12 u^{[k]}(t).
\end{align*}
where $C_T>0$ does not depend on~$n$ thanks to~\eqref{aprioripicardmckeangeneral}. The second inequality above comes from the fact that we fix the value of $T>0$ such that $L\sqrt{T}<1/4.$

Now, introducing $S_n(t) := \sum_{k=0}^nu^{[k]}_t$ and summing the above inequality from $ k=0 $ to $k=n-1,$ we have, for all $t\leq T,$
$$S_n(t)\leq C_T+ C_T\int_0^t \ll[(1+r^{[n]}_s) S_n(s)ds + ne^{-r^{[n]}_s}]\rr)ds + \frac12 S_n(t),$$
where we have used that $ u_t^{[0]} \le C_T$ and $S_{n-1}(t)\leq S_n(t).$ 
This implies
$$S_n(t)\leq C_T+ C_T\int_0^t \ll[(1+r^{[n]}_s) S_n(s)ds + ne^{-r^{[n]}_s}\rr]ds.$$
Let $D_T := \max(\underset{k\geq 0}{\sup} \; \underset{s \le T}{\sup} |u^{[k]}_s | ,C_T,1)<\infty,$ and introduce
$$R_n(t) := \frac{S_n(t)}{(n+1)D_Te^{2}} \leq e^{-2}.$$
Consequently, for all $t\leq T,$
$$R_n(t)\leq \frac{1}{n+1}+C_T \int_0^t\ll[(1+r^{[n]}_s)R_n(s)+e^{-r^{[n]}_s}\rr]ds.$$
Finally we choose $r^{[n]}_t := -\ln R_n(t)\geq 2$ and obtain for all $ t \le T,$ 
$$
R_n(t)\leq \frac{1}{n+1}+ C_T\int_0^t\ll(2-\ln R_n(s)\rr)R_n(s)ds 
\leq\frac{1}{n+1} - C_T\int_0^t R_n(s)\ln R_n(s)ds.
$$
As before we apply Osgood's lemma. Let $M(x):=\int_x^{e^{-2}}\frac{1}{-s\ln s}ds = \ln(-\ln x) - \ln 2.$ Then
$$-M(R_n (T))+M( 1/(n+1) )\leq C_T T$$
or equivalently
$$R_n (T) \le (n+1)^{ - e^{- C_T T}} \; \mbox{ such that }
\; S_n(T)\leq C_T n^{ 1 - e^{-  C_T T} }.$$

Lemma~\ref{seriecv0} above then implies that there exists a subsequence of $(u^{[n]}_T)_n$ that converges to $0$ as $n$ goes to infinity. This proves~\eqref{XnXnplusunmckeangeneral}.

{\it Step~4.} Let us prove that a subsequence of $(\mu^{[n]})_n$ converges to some limit $\mu:t\mapsto \mu_t$ in the following sense
$$\underset{0\leq t\leq T}{\sup}W_1(\mu^{[n]}_t,\mu_t)\underset{n\rightarrow\infty}{\longrightarrow}0,$$
where $\mu_t := \mathcal{L}(X_t)$ for a.e. $t\leq T.$

We prove this point by proving that the sequence of functions $\mu^{[n]}:t\mapsto \mu^{[n]}_t = \mathcal{L}(X^{[n]}_t)\in\mathcal{P}_1(\r)$ is relatively compact, using Arzel\`a-Ascoli's theorem.

To begin with, the definition of~$W_1$ and the same computation as the one used to obtain~\eqref{aldousmckeangeneral} allows to prove that, for all $s,t\leq T,$ for all $n\in\n,$
\begin{equation}
\label{ascolimodule}
W_1(\mu^{[n]}_t,\mu^{[n]}_s)\leq \esp{\ll|X^{[n]}_t-X^{[n]}_s\rr|}\leq C \ll(|t-s| + \sqrt{|t-s|}\rr),
\end{equation}
for a constant $C>0$ independent of~$n$.

This implies that the sequence $\mu^{[n]}:t\mapsto\mu^{[n]}_t$ is equicontinuous. In addition, by~\eqref{aprioripicardmckeangeneral} we know that, for every $t\leq T,$ the set $(\mu^{[n]}_t)_n$ is tight, and whence relatively compact (in the topology of the weak convergence, but not in $\mathcal{P}_1(\r)$ a priori). Indeed, for any $\eps>0,$ considering $M_\eps := \sup_n \esp{|X^{[n]}_t|}/\eps,$ we have, for all $n,$
$$\mu^{[n]}_t(\r\backslash [-M_\eps,M_\eps]) = \pro{|X^{[n]}_t|> M_\eps}\leq \frac1{M_\eps}\esp{|X^{[n]}_t|}\leq\eps.$$

In particular, for every $t\leq T,$ we can consider a subsequence of $(\mu^{[n]}_t)_n$ that converges weakly. To prove that this convergence holds for the metric $W_1,$ we rely on the characterization~$(iii)$ of $W_1$ given in Definition~6.8, and Theorem~6.9 of \cite{villani_optimal_2008}. According to this result, the convergence of the same subsequence of $(\mu^{[n]}_t)_n$ for $W_1$ follows from~\eqref{aprioripicardmckeangeneral}, Markov's inequality, Cauchy-Schwarz's inequality and the fact that,
$$\esp{|X^{[n]}_t|\uno{|X^{[n]}_t|>R|}}\leq \frac1R ~\underset{k\in\n}{\sup}~\esp{|X^{[k]}_t|}\esp{(X^{[k]}_t)^2}^{1/2}\underset{R\rightarrow\infty}{\longrightarrow}0.$$

We can then conclude that, for all $t\leq T,$ the sequence $(\mu^{[n]}_t)_n$ is also relatively compact on~$\mathcal{P}_1(\r).$ 

Then, thanks to~\eqref{ascolimodule}, Arzel\`a-Ascoli's theorem implies that the sequence $(\mu^{[n]})_n$ is relatively compact. As a consequence, there exists a subsequence of $(\mu^{[n]})_n$ (as previously, we do not write this subsequence explicitly in the notation) that converges to some $\mu:t\mapsto\mu_t\in\mathcal{P}_1(\r)$ in the following sense
$$\underset{0\leq t\leq T}{\sup}W_1(\mu^{[n]}_t,\mu_t)\underset{n\rightarrow\infty}{\longrightarrow}0.$$


The last thing to show in this step is that $\mu_t = \mathcal{L}(X_t)$ for a.e. $t\leq T.$ By construction, $\mu_t$ is the limit of $\mu^{[n]}_t := \mathcal{L}(X^{[n]}_t).$ Recalling that $X^{[n]}$ converges to $X$ in distribution in Skorohod topology, we know that for all continuity point~$t$ of $s\mapsto \mathcal{L}(X_s)$, $\mu_t = \mathcal{L}(X_t).$
%

{\it Step~5.} Recall that, for a subsequence, $(X^{[n]},X^{[n+1]})_n$ converges to $(X,X)$ in distribution in $D([0,T],\r^2),$ and $(\mu^{[n]})_n$ (which is a sequence of deterministic and continuous functions from $\r_+$ to $\mathcal{P}_1(\r)$) converges uniformly to $\mu$ on $[0,T]$. The aim of this step is to prove that $(X^{[n]},X^{[n+1]},\mu^{[n]})_n$ converges to $(X,X,\mu)$ in distribution in $D([0,T],\r^2\times\mathcal{P}_1(\r)).$ We consider $\mu^{[n]}$ in the previous distribution even though it is deterministic, because the important point in the convergence we want to prove is that $\mu^{[n]}$ must converge w.r.t. the same sequence of time-changes as the one of $(X^{[n]},X^{[n+1]})$. In particular, it is important to have convergence in the topology of $D([0,T],\r^2\times\mathcal{P}_1(\r))$ rather than in the weaker topology $D([0,T],\r^2)\times D([0,T],\mathcal{P}_1(\r)).$

As $\mu$ is continuous, we have that, for any sequence of time-changes $(\lambda_n)_n$ the convergence
\begin{equation}
\label{cvlambdamu}
\underset{0\leq t\leq T}{\sup}W_1(\mu^{[n]}_t,\mu_{\lambda_n(t)})\underset{n\rightarrow\infty}{\longrightarrow}0.
\end{equation}

By Skorohod's representation theorem, we can assume that some representative r.v. $(\tilde X^{[n]},\tilde X^{[n+1]})$ of $(X^{[n]},X^{[n+1]})$ converges a.s. to representative r.v. $(\tilde X,\tilde X)$ of $(X,X)$ in $D([0,T],\r^2).$ This implies that, almost surely, there exists a sequence of time-changes $(\lambda_n)_n$ such that
$$\underset{0\leq t\leq T}{\sup}\ll|\tilde X^{[n]}_t - \tilde X_{\lambda_n(t)}\rr|\textrm{ and }\underset{0\leq t\leq T}{\sup}\ll|\tilde X^{[n+1]}_t - \tilde X_{\lambda_n(t)}\rr|$$
vanish as $n$ goes to infinity. So, by~\eqref{cvlambdamu}, almost surely, there exists a sequence of time-changes $(\lambda_n)_n$ such that
$$\underset{0\leq t\leq T}{\sup} d\ll[\ll(\tilde X^{[n]}_t,\tilde X^{[n+1]}_t,\mu^{[n]}]_t\rr),\ll(\tilde X_{\lambda_n(t)},\tilde X_{\lambda_n}(t),\mu_{\lambda_n(t)}\rr)\rr]\underset{n\rightarrow\infty}{\longrightarrow}0,$$
with $d[(x,y,m),(x',y',m')] = |x-x'|+|y-y'|+W_1(m,m').$ In particular, we know that the sequence $(\tilde X^{[n]},\tilde X^{[n+1]},\mu^{[n]})_n$ converges to $(\tilde X,\tilde X,\mu)$ almost surely in $D([0,T],\r^2\times\mathcal{P}_1(\r)).$ This implies that $(X^{[n]},X^{[n+1]},\mu^{[n]})_n$ converges to $(X,X,\mu)$ in distribution in $D([0,T],\r^2\times\mathcal{P}_1(\r)).$


{\it Step~6.} This step concludes the proof, showing that $X$ is solution to~\eqref{mckeangeneral}. In order to prove that $X$ is solution to~\eqref{mckeangeneral}, we use the fact that, using the notation of Definitions~II.2.6 and~II.2.16 of \cite{jacod_limit_2003}, $X^{[n+1]}$ is a semimartingale with characteristics $(B^{[n+1]}, C^{[n+1]},\nu^{[n+1]})$ given by
\begin{align*}
B^{[n+1]}_t=& \int_0^t b( X^{[n]}_s,\mu^{[n]}_s)ds,\\
C^{[n+1]}_t = & \int_0^t\sigma(X^{[n]}_s,\mu^{[n]}_s)^2ds,\\
\nu^{[n+1]}(dt,dx)=&f( X^{[n]}_t,\mu^{[n]}_t)dt \int_E\delta_{\Phi(X^{[n]}_t,\mu^{[n]}_t,u)}(dx)d\rho(u).
\end{align*}
Let us note that, above, we have chosen as truncation function $h=0,$ hence the modified second characteristics $\tilde C^{[n+1]}$ is the same as $C^{[n+1]}.$

Recall that, in {\it Step~5}, we have shown that, for a subsequence, $(X^{[n]},X^{[n+1]},\mu^{[n]})_n$ converges in distribution in $D([0,T],\r^2\times\mathcal{P}_1(\r))$ to $(X,X,\mu).$ Using once again Skorohod's representation theorem, we can consider representative r.v. for which the previous convergence is almost sure. Whence { (recalling that for $x\in D(\r_+,\r),$ the function $t\mapsto \int_0^t x_sds$ is continuous for Skorokhod topology)}, for all $g\in C_b(\r),$ the following convergences hold almost surely for the representative r.v. and hence in distribution:
\begin{align*}
&\left(X^{[n+1]},B^{[n+1]},C^{[n+1]}\right)\stackrel{\mathcal{L}}{\underset{n\rightarrow+\infty}{\longrightarrow}}\left(X,\int_0^\cdot b(X_s,\mu_s)ds,\int_0^\cdot \sigma(X_s,\mu_s)^2ds\right),\\
&\left(X^{[n+1]},\int_{[0,\cdot]\times\r}g(x)\nu^{[n+1]}(ds,dx)\right)\stackrel{\mathcal{L}}{\underset{n\rightarrow+\infty}{\longrightarrow}}\left(X,\int_0^\cdot \int_E g(\Phi(X_s,\mu_s,u))f(X_s,\mu_s)d\rho(u)ds\right),
\end{align*}
where the convergences hold respectively in the spaces $D(\r_+,\r^3)$ and $D(\r_+,\r^2).$

Then, Theorem~IX.2.4 of \cite{jacod_limit_2003} implies that $X$ is a semimartingale with characteristics $(B,C,\nu)$ given by
\begin{align*}
B_t=& \int_0^t b(X_s,\mu_s)ds,\\
C_t = & \int_0^t\sigma(X_s,\mu_s)^2ds,\\
\nu(dt,dx)=&f(X_t,\mu_t)dt \int_E\delta_{\Phi(X_t,\mu_t,u)}(dx)d\rho(u).
\end{align*}

Then, we can use the canonical representation of $X$ (see Theorem~II.2.34 of \cite{jacod_limit_2003}): $X = X_0 + B + M^c + Id * \mu^X,$ where $M^c$ is a continuous locale martingale, $\mu^X=\sum_s\uno{\Delta Y_s\neq 0}\delta_{(s,X_s)}$ is the jump measure of~$X$ (let us recall that we chose the truncation function $h=0$) and $(Id * \mu^X)_t := \int_0^t\int_\r x d\mu^X(s,x)$. By definition of the characteristics, $\langle M^c\rangle_t = C_t.$ Whence, by Theorem~II.7.1 of \cite{ikeda_stochastic_1989}, there exists a Brownian motion~$W$ such that
\begin{equation}
\label{mcmckeangeneral}
M^c_t = \int_0^t\sigma(X_s,\mu_s)dW_s.
\end{equation}

In addition, we know that $\nu$ is the compensator of $\mu^X$. We rely on Theorem~II.7.4 of \cite{ikeda_stochastic_1989}. Using the notation therein, we introduce $Z=\r_+\times E,$ $m(dz,du)=dz\rho(du)$ and
$$\theta(t,z,u):=\Phi(X_{t-},\mu_{t-},u)\uno{z\leq f(X_{t-},\mu_{t-})}.$$
According to Theorem~II.7.4 of \cite{ikeda_stochastic_1989}, there exists a Poisson measure $\pi$ on $\r_+\times\r_+\times E$ having intensity $dt\cdot dz\cdot d\rho(u)$ such that, for all $A\in\mathcal{B}(\r),$
\begin{equation*}
\mu^X([0,t]\times A)=\int_0^t\int_0^\infty\int_E\uno{\theta(s,z,u)\in A}d\pi(s,z,u).
\end{equation*}

This implies that
\begin{equation}
\label{mdmckeangeneral}
(Id * \mu^X)_t = \int_{[0,t]\times\r_+\times E}\Phi(X_{s-},\mu_{s-},u)\uno{z\leq f(X_{s-},\mu_{s-})}d\pi(s,z,u).
\end{equation}

Finally, recalling that $X = X_0+B+M^c+Id*\mu^X,$~\eqref{mcmckeangeneral} and~\eqref{mdmckeangeneral}, we have just shown that $X$ is a weak solution to~\eqref{mckeangeneral} on $[0,T]$.
\end{proof}

\subsection{Proof of Theorem~\ref{wellposedmckeangeneral}}

In Section~\ref{uniquenessmckeangeneral}, we have proven the (global) pathwise uniqueness of solutions of~\eqref{mckeangeneral}, and, in Section~\ref{existencemckeangeneral}, the existence of a weak solution of~\eqref{mckeangeneral} on $[0,T]$, with $T=1/(16L^2)$.

Then, generalizations of Yamada-Watanabe results allows to construct a strong solution on $[0,T]$: it is a consequence of Theorem~1.5 and Lemma~2.10 of \cite{kurtz_weak_2014} (see the discussion before Lemma~2.10 or Example~2.14 for more details).

More precisely, given a Brownian motion~$W,$ a Poisson random measure~$\pi$ and an initial condition~$X_0,$ there exists a strong solution $(X_t)_{0\leq t\leq T}$ defined w.r.t. these $W,\pi,X_0.$ Then, one can construct a strong solution~$(X_t)_{T\leq t\leq 2T}$ on $[T,2T]$ defined w.r.t. the Brownian motion $(W_{T+t}-W_T)_{t\geq 0},$ the Poisson measure $\pi_T$ defined by
$$\pi_T(A\times B) = \pi(\{T+x:x\in A\}\times B),$$
and the initial condition~$X_T.$ Iterating this reasoning, we can construct a strong solution of~\eqref{mckeangeneral} on $[0,kT]$ for any $k\in\n^*$, with $T=1/(16L^2)>0.$ Hence, there exists a (global) strong solution of~\eqref{mckeangeneral}. This proves Theorem~\ref{wellposedmckeangeneral}.


\section{Propagation of chaos}
\label{sectionchaosmckeangeneral}

In this section, we prove a propagation of chaos for McKean-Vlasov systems: Theorem~\ref{chaosmckeangeneral}. This property in the globally Lipschitz case has been proven in Proposition~3.1 of \cite{andreis_mckeanvlasov_2018}. Let us introduce the $N-$particle system $(X^{N,i})_{1\leq i\leq N}$

\begin{align}
dX^{N,i}_t=& b(X^{N,i}_t,\mu^N_t)dt + \sigma(X^{N,i}_t,\mu^N_t)dW^i_t + \int_{\r_+\times F^{\n^*}}\Psi(X^{N,i}_{t-},\mu^N_{t-},v^i)\uno{z\leq f(X^{N,i}_{t-},\mu^N_{t-})}d\pi^i(t,z,v)\nonumber\\
&+ \frac1N\sum_{\substack{j=1\\j\neq i}}^N\int_{\r_+\times F^{\n^*}}\Theta(X^{N,j}_{t-},X^{N,i}_{t-},\mu^N_{t-},v^j,v^i)\uno{z\leq f(X^{N,j}_{t-},\mu^N_{t-})}d\pi^j(t,z,v),\label{xnmckeangeneral}
\end{align}
with $\mu^N:=N^{-1}\sum_{j=1}^N\delta_{X^{N,j}},$ $W^i$ ($i\geq 1$) independent standard one-dimensional Brownian motions, and $\pi^i$ ($i\geq 1$) independent Poisson measures on $\r_+^2\times F^{\n^*}$ with intensity $dt\cdot dz\cdot d\nu(v),$ where $F$ is a measurable space, and $\nu$ is a $\sigma-$finite symmetric measure on~$F^{\n^*}$ (i.e. $\nu$ is invariant under finite permutations).


In the following, we assume that $b,\sigma$ and $f$ satisfy the same conditions as in Assumptions~\ref{hypmoment} and~\ref{hypnonlip}, and that $\Psi$ satisfies the same as $\Phi$ with $E = F^{\n^*}$ and $\rho = \nu.$ We also assume that $\Theta$ satisfies similar conditions: for all $x_1,x_1',x_2,x_2'\in\r,$ $m_1,m_2\in\mathcal{P}_1(\r),$
\begin{multline*}
\int_{F^{\n^*}}\int_{\r_+}|\Theta(x_1,x_1',m_1,v^1,v^2)\uno{z\leq f(x_1,m_1)} - \Theta(x_2,x_2',m_2,v^1,v^2)\uno{z\leq f(x_2,m_2)}|dzd\nu(v)\\
\leq L\ll(1+|x_1|+|x_1'|+|x_2|+|x_2'| + \int_\r |x|^{k_0}dm_1(x) + \int_\r |x|^{k_0}dm_2(x)\rr)\\
\ll(|x_1-x_2|+|x_1'-x_2'|+W_1(m_1,m_2)\rr),
\end{multline*}
and, for any $0<\lambda\leq 1,n\in\n^*,$
\begin{equation}\label{hyptheta}
\int_{F^{\n^*}}\ll|\ll(x_1 + \lambda \Theta(y_1,x_1,m,v^1,v^2)\rr)^n - x_1^n\rr|d\nu(v) f(y^1,m)\leq C\cdot \lambda\cdot n (1+|x_1|^n + |y_1|^n).
\end{equation}

In addition, we assume that
$$\underset{N\in\n^*}{\sup}\esp{e^{a|X^{N,1}_0|}}<\infty,$$
and that, for every $N\in\n^*,$ the system $(X^{N,i}_0)_{1\leq i\leq N}$ is i.i.d.

We prove that these $N-$particles systems converge as $N$ goes to infinity to the following limit system.

\begin{align}
d\bar X^{i}_t=& b(\bar X^{i}_t,\bar \mu_t)dt + \sigma(\bar X^{i}_t,\bar \mu_t)dW^i_t + \int_{\r_+\times F^{\n^*}}\Psi(\bar X^{i}_{t-},\bar\mu_{t-},v^i)\uno{z\leq f(\bar X^{i}_{t-},\bar\mu_{t-})}d\pi^i(t,z,v)\nonumber\\
&+ \int_\r\int_{F^{\n^*}}\Theta(x,\bar X^{i}_{t},\bar \mu_{t},v^1,v^2)f(x,\bar\mu_t)d\nu(v)d\bar\mu_t(x),\label{barxmckeangeneral}
\end{align}
where $\bar\mu_t = \mathcal{L}(\bar X_t).$ We assume that the variables $\bar X^i_0$ ($i\geq 1$) are i.i.d. and satisfy
$$\esp{e^{a|\bar X^i_0|}}<\infty.$$


Let us remark that the (strong) well-posedness of equation~\eqref{barxmckeangeneral} is a consequence of Theorem~\ref{wellposedmckeangeneral} for the same $\sigma,$ $\Psi = \Phi$ and for the drift
$$b(x,m) + \int_\r\int_{F^{\n^*}}\Theta(y,x,m,v^1,v^2)f(y,m)d\nu(v)dm(y).$$

One can also prove the (strong) well-posedness of equation~\eqref{xnmckeangeneral} using a similar reasoning as the one used in the proof of Theorem~\ref{wellposedmckeangeneral}. The only difference is for the {\it Step~4} of the proof of Proposition~\ref{propexistenceweaksolutionmckeangeneral}, since, for~\eqref{xnmckeangeneral} the measure $\mu^N$ is not deterministic. Instead of proving that the sequence of measures $(\mu^{[n]})_n$ constructed in the Picard scheme is relatively compact by Arzel\`a-Ascoli's theorem, we rely exclusively on the following lemma whose proof is postponed to Appendix.

\begin{lem}
\label{cvdmesureempirique}
Let $N\in\n^*,T>0,$ and $(x^k)_{1\leq k\leq N}$ and $(x_n^k)_{1\leq k\leq N}$ $(n\in\n)$ be c\`adl\`ag functions. Define
$$\mu_n(t):=N^{-1}\sum_{k=1}^N\delta_{x_n^k(t)}\textrm{ and }\mu(t):=\sum_{k=1}^N\delta_{x^k(t)}.$$

Let $\lambda_n$ ($n\in\n$) be continuous, increasing functions satisfying $\lambda_n(0)=0,$ $\lambda_n(T)=T$, and that, for any $1\leq k\leq N$,
$$\underset{0\leq t\leq T}{\sup}\ll|x_n^k(t)-x^k(\lambda_n(t))\rr|\textrm{ and }\underset{0\leq t\leq T}{\sup}\ll|t-\lambda_n(t)\rr|$$
vanish as $n$ goes to infinity. Then,
$$\underset{0\leq t\leq T}{\sup}W_1\ll(\mu_n(t),\mu(\lambda_n(t))\rr)\underset{n\rightarrow\infty}{\longrightarrow}0.$$
%
%
\end{lem}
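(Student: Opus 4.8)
The plan is to establish the Wasserstein convergence of the empirical measures directly from the defining formula of $W_1$ as an infimum over couplings, exploiting the obvious coupling that matches the $k$-th atom of $\mu_n(t)$ with the $k$-th atom of $\mu(\lambda_n(t))$. Concretely, for each fixed $t\in[0,T]$, the pair $(\mu_n(t),\mu(\lambda_n(t)))$ admits the coupling that assigns mass $1/N$ to each pair $(x_n^k(t),x^k(\lambda_n(t)))$, so that
\begin{equation*}
W_1\ll(\mu_n(t),\mu(\lambda_n(t))\rr)\leq \frac1N\sum_{k=1}^N\ll|x_n^k(t)-x^k(\lambda_n(t))\rr|.
\end{equation*}
Taking the supremum over $t\in[0,T]$ and then letting $n\to\infty$, the right-hand side is bounded by $\frac1N\sum_{k=1}^N\sup_{0\leq t\leq T}|x_n^k(t)-x^k(\lambda_n(t))|$, which is a finite sum of terms each tending to $0$ by hypothesis. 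This yields the claim immediately.

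The first step I would carry out is to verify carefully that the proposed measure on $\r^2$ (the push-forward of the uniform measure on $\{1,\dots,N\}$ under $k\mapsto(x_n^k(t),x^k(\lambda_n(t)))$) is indeed a coupling of $\mu_n(t)$ and $\mu(\lambda_n(t))$, i.e.\ that its marginals are correct; this is a direct check from the definitions of $\mu_n$ and $\mu$. (Note there is a harmless discrepancy of normalization in the statement as written, since $\mu$ is defined without the $N^{-1}$ factor; I would read it as $\mu(t):=N^{-1}\sum_k\delta_{x^k(t)}$ so that both are probability measures and $W_1$ makes sense.) The second step is to invoke the variational characterization of $W_1$ as $\inf_{\text{couplings}}\esp{d(X_1,X_2)}$ recalled in the Notation section to get the pointwise-in-$t$ bound above; since $d$ here is the Euclidean distance on $\r$, the bound is exactly the averaged displacement. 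The third step is to take $\sup_{t\in[0,T]}$ on both sides — crucially the coupling can be chosen to be the ``same'' one (indexing by $k$) for every $t$, so the supremum passes inside the sum — and the fourth step is to pass to the limit $n\to\infty$ using the uniform convergence hypothesis for each of the finitely many indices $k$.

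I do not expect any serious obstacle: the lemma is genuinely elementary, and the only point requiring a little care is the bookkeeping of which measure is coupled with which — one must match atom $k$ of $\mu_n(t)$ with atom $k$ of $\mu(\lambda_n(t))$ (the time-changed one), not with atom $k$ of $\mu(t)$, so that the displacement being controlled is exactly the quantity $\sup_t|x_n^k(t)-x^k(\lambda_n(t))|$ appearing in the hypothesis. The finiteness of $N$ is what makes the argument work: it lets us interchange the limit with the sum without any uniform integrability or tightness considerations, which is precisely why this lemma (rather than an Arzel\`a--Ascoli argument) is the right tool when $\mu^N$ is random, as noted in the discussion preceding the statement.
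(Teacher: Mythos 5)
Your proof is correct and is essentially the paper's own argument: the paper's proof consists precisely of the bound $W_1(\mu_n(t),\mu(\lambda_n(t)))\leq \frac1N\sum_{k=1}^N|x_n^k(t)-x^k(\lambda_n(t))|$, which you justify via the diagonal coupling, followed by taking the supremum in $t$ and using the hypothesis for each of the finitely many indices $k$. Your reading of the missing $N^{-1}$ in the definition of $\mu(t)$ as a typo is also the right one.
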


\begin{rem}
This lemma allows to prove that, if $(x_n,y_n)_n$ converges to $(x,y)$ in $D([0,T],(\r^N)^2),$ then, the sequence $(x_n,y_n,\mu_n)_n$ converges to $(x,y,\mu)$ in $D([0,T],(\r^N)^2\times\mathcal{P}_1(\r)).$ 
\end{rem}

In the following, we assume that $(X^{N,i})_{1\leq i\leq N}$ and $(\bar X^i)_{i\geq 1}$ are strong solutions of respectively~\eqref{xnmckeangeneral} and~\eqref{barxmckeangeneral} defined w.r.t. the same Brownian motions $W^i$ ($i\geq 1$) and the same Poisson measures $\pi^i$ ($i\geq 1$) such that all the systems are defined on the same space. In addition, we assume that the following condition holds true 
\begin{equation}
\label{condinitcv0mckeangeneral}
\eps^N_0 := \esp{\ll|X^{N,1}_0 - \bar X^1_0\rr|}\underset{N\rightarrow\infty}{\longrightarrow}0.
\end{equation}

Now let us state the main result of this section: the propagation of chaos of the $N-$particle systems, that is, the convergence of the systems $(X^{N,i})_{1\leq i\leq N}$ to the i.i.d. system $(\bar X^i)_{i\geq 1}$ as $N$ goes to infinity. We comment the convergence speed in Remark~\ref{cvspeedmckeangeneral}.

\begin{thm}
\label{chaosmckeangeneral}
We have, for all $T_0>0,$
$$\esp{\underset{0\leq t\leq T_0}{\sup}\ll|X^{N,1}_t - \bar X^1_t\rr|}\underset{N\rightarrow\infty}{\longrightarrow}0.$$

Consequently, for all $k\geq 1,$ the following weak convergence holds true:
$$\mathcal{L}(X^{N,1},X^{N,2},...,X^{N,k})\underset{N\rightarrow\infty}{\longrightarrow}\mathcal{L}(\bar X^1)\otimes\mathcal{L}(\bar X^2)\otimes...\otimes\mathcal{L}(\bar X^k),$$
in the product topology of the topology of the uniform convergence on every compact set.
\end{thm}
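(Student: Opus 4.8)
The plan is to mimic the pathwise‑uniqueness argument of Proposition~\ref{unicitemckeangeneral}, replacing the two solutions $\hat X,\check X$ of the same McKean–Vlasov equation by the coupled pair $(X^{N,1},\bar X^1)$, and to track the extra error terms coming from (a) the discrepancy between the empirical measure $\mu^N_t$ and the limit law $\bar\mu_t$, and (b) the difference between the interaction term $\frac1N\sum_j\int\Theta(\cdots)d\pi^j$ of the $N$‑particle system and its mean‑field counterpart $\int_\r\int_{F^{\n^*}}\Theta(x,\bar X^1_t,\bar\mu_t,v^1,v^2)f(x,\bar\mu_t)d\nu(v)d\bar\mu_t(x)$. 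First I would record a priori estimates for both systems: exactly as in Lemma~\ref{apriorimckeangeneral}, the boundedness conditions on $b,\sigma,f,\Psi,\Theta$ and the exponential‑moment assumptions on the initial data give $\sup_N\sup_{s\le t}\esp{e^{a|X^{N,1}_s|}}<\infty$, $\sup_N\esp{\sup_{s\le t}|X^{N,1}_s|}<\infty$, and the same for $\bar X^1$; by exchangeability these bounds are uniform in the particle index. Set $\varepsilon^N(t):=\esp{\sup_{s\le t}|X^{N,1}_s-\bar X^1_s|}$, which is finite by these estimates.

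Next I would bound $\varepsilon^N(t)$ by writing the difference $X^{N,1}_s-\bar X^1_s$ as a sum of a drift integral, a Brownian stochastic integral (handled by BDG), the $\Psi$‑jump integral, and the interaction term. The drift, the $\sigma$‑term and the $\Psi$‑term are controlled exactly as in Proposition~\ref{unicitemckeangeneral} by the locally Lipschitz hypotheses, producing contributions of the form $C_T\int_0^t\esp{(1+|X^{N,1}_s|+|\bar X^1_s|)(|X^{N,1}_s-\bar X^1_s|+W_1(\mu^N_s,\bar\mu_s))}ds$ plus $2L\sqrt t\,\varepsilon^N(t)$. For the interaction term I would split $\frac1N\sum_j\int\Theta(X^{N,j},X^{N,1},\mu^N,v^j,v^1)\uno{z\le f(X^{N,j},\mu^N)}d\pi^j$ minus its drift $\frac1N\sum_j\int\Theta(X^{N,j},X^{N,1},\mu^N,v^j,v^1)f(X^{N,j},\mu^N)d\nu(v)$ (a martingale whose $L^1$‑norm over $[0,t]$ is $O(\sqrt{t/N})$ by BDG, since the $\pi^j$ are independent with $\sigma$‑finite intensity and $\Theta$ has finite exponential moments, hence finite $L^2$ moments uniformly), plus the difference of this drift and $\frac1N\sum_j\int\Theta(X^{N,j},\bar X^1,\bar\mu,v^j,v^1)f(X^{N,j},\bar\mu)d\nu(v)$ (controlled by the $\Theta$‑Lipschitz condition, yielding $(1+|X^{N,j}|+|X^{N,1}|+|\bar X^1|+\text{exp.\ moments})(|X^{N,1}-\bar X^1|+W_1(\mu^N,\bar\mu))$, then averaged over $j$ and bounded using exchangeability), plus finally $\frac1N\sum_j\int\Theta(X^{N,j},\bar X^1,\bar\mu,v^j,v^1)f(X^{N,j},\bar\mu)d\nu(v)-\int_\r\int\Theta(x,\bar X^1,\bar\mu,v^1,v^2)f(x,\bar\mu)d\nu(v)d\bar\mu(x)$, which is a law‑of‑large‑numbers term: replacing $X^{N,j}$ by $\bar X^j$ costs $\frac1N\sum_j|X^{N,j}-\bar X^j|$ (with expectation $\varepsilon^N(s)$ by exchangeability, using the $\Theta$‑Lipschitz bound and the a priori moments), and then $\frac1N\sum_j\int\Theta(\bar X^j,\bar X^1,\bar\mu,v^j,v^1)f(\bar X^j,\bar\mu)d\nu(v)-\esp{\int\Theta(\bar X,\bar X^1,\bar\mu,v,v^1)f(\bar X,\bar\mu)d\nu(v)\mid\bar X^1}$ has $L^1$‑norm $O(1/\sqrt N)$ by the standard variance estimate for i.i.d.\ sums conditionally on $\bar X^1$ (the cross terms vanish since $\bar X^j$, $j\ge2$, are i.i.d.\ and independent of $\bar X^1$; the diagonal $j=1$ term is $O(1/N)$).

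Collecting everything, and using $W_1(\mu^N_s,\bar\mu_s)\le W_1(\mu^N_s,\frac1N\sum_j\delta_{\bar X^j_s})+W_1(\frac1N\sum_j\delta_{\bar X^j_s},\bar\mu_s)\le\frac1N\sum_j|X^{N,j}_s-\bar X^j_s|+W_1(\frac1N\sum_j\delta_{\bar X^j_s},\bar\mu_s)$, taking expectations and invoking exchangeability to replace $\esp{\frac1N\sum_j\sup_{s\le t}|X^{N,j}_s-\bar X^j_s|}$ by $\varepsilon^N(t)$, and bounding $\esp{W_1(\frac1N\sum_j\delta_{\bar X^j_s},\bar\mu_s)}$ by a deterministic rate $\alpha_N\to0$ (Glivenko–Cantelli / Fournier–Guillin, using the uniform moment bound), one arrives — after the truncation argument of Proposition~\ref{unicitemckeangeneral}, which turns the factor $(1+|X^{N,1}_s|+|\bar X^1_s|)$ into $(1+r_s)$ plus an $e^{-r_s}$ remainder via the exponential moments — at an inequality of the form, for $t\le T=1/(16L^2)$,
$$\varepsilon^N(t)\le C_T\,\beta_N+C_T\int_0^t\bigl[(1+r_s)\varepsilon^N(s)+e^{-r_s}\bigr]ds,$$
where $\beta_N:=\eps^N_0+\alpha_N+N^{-1/2}\to0$ (the initial‑error bound $\eps^N_0$ entering through $|X^{N,1}_0-\bar X^1_0|$, and the constant $2L\sqrt T\le1/2$ having been absorbed). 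Choosing $r_s:=-\ln v(s)$ with $v:=\varepsilon^N/(D_Te^2)$ exactly as in the uniqueness proof and applying Osgood's lemma (Lemma~\ref{osgood}) with $\mu(v)=(-\ln v)v$ gives $M(C_T\beta_N)\le M(\varepsilon^N(T))+2C_TT$, i.e.\ $\varepsilon^N(T)\le(C_T\beta_N)^{e^{-C_TT}}\to0$. Since $T$ depends only on $L$, the argument iterates over $[kT,(k+1)T]$, with the role of the initial error at step $k+1$ played by $\varepsilon^N(kT)\to0$, to yield $\esp{\sup_{t\le T_0}|X^{N,1}_t-\bar X^1_t|}\to0$ for every $T_0>0$; exchangeability then gives the same for every index $i$, and the chaos statement for $k$‑marginals follows from the i.i.d.\ structure of $(\bar X^i)$ together with $\esp{\sum_{i=1}^k\sup_{t\le T_0}|X^{N,i}_t-\bar X^i_t|}\to0$, which forces convergence in distribution in $D([0,T_0],\r)^k$ for every $T_0$, hence in the product topology of uniform convergence on compacts. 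The main obstacle is the bookkeeping in the interaction‑term decomposition: one must carefully separate the martingale part (BDG, needing the $L^2$ moments that Item~3 of Assumption~\ref{hypnonlip} provides), the Lipschitz comparison part (needing the $\Theta$‑condition together with the uniform exponential moments so that the truncation works), and the genuine law‑of‑large‑numbers part (needing independence of the $\bar X^j$, $j\ge2$, from $\bar X^1$ and the conditional variance bound), while keeping every constant $C_T$ independent of $N$ so that Osgood's lemma can be applied with an $N$‑independent rate function.
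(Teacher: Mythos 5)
Your proposal is correct and follows essentially the same route as the paper: couple $(X^{N,1},\bar X^1)$ through the same noises, isolate the compensated-jump remainder (the paper's $G^N$, bounded in $L^2$ by BDG and independence of the $\pi^j$ at rate $N^{-1/2}$), bound $W_1(\mu^N_s,\bar\mu_s)$ by the coupling distance plus the empirical-measure rate of Fournier--Guillin, and then run the truncation/Osgood argument on $[0,T]$ with $T=1/(16L^2)$ and iterate, exactly as in the pathwise-uniqueness proof. The only cosmetic difference is that you treat the law-of-large-numbers replacement of $\frac1N\sum_j\int\Theta(\bar X^j_s,\bar X^1_s,\bar\mu_s,\cdot)f(\bar X^j_s,\bar\mu_s)d\nu$ by its conditional mean via a direct conditional-variance estimate, whereas the paper absorbs this step into the locally Lipschitz-in-measure bound combined with $\esp{W_2(\bar\mu^N_s,\bar\mu_s)}\leq C_T N^{-1/2}$; both yield the same $N^{-1/2}$ contribution.
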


\begin{rem}
We just state the result of Theorem~\ref{chaosmckeangeneral} for the first coordinate because both systems $(X^{N,i})_{1\leq i\leq N}$ and $(\bar X^i)_{i\geq 1}$ are exchangeable.
\end{rem}

\begin{rem}
\label{cvspeedmckeangeneral}
In the proof of Theorem~\ref{chaosmckeangeneral}, we obtain a convergence speed for
$$\esp{\underset{0\leq t\leq T_0}{\sup}\ll|X^{N,1}_t - \bar X^1_t\rr|}\underset{N\rightarrow\infty}{\longrightarrow}0$$
that depends on~$T_0.$ Indeed, if $T := 1/(16L^2),$ the formula~\eqref{eqcvspeed} below gives a convergence speed for
$$\esp{\underset{0\leq t\leq T}{\sup}\ll|X^{N,1}_t - \bar X^1_t\rr|}$$
of the form
$$S_0^N := C^1\ll(\eps^N_0 + N^{-1/2}\rr)^{C^2},$$
for some positive constants $C^1,C^2,$ where $\eps^N_0$ is given at~\eqref{condinitcv0mckeangeneral}. And, for all $k\in\n^*,$ the convergence speed $S_k^N$ of
$$\esp{\underset{k T\leq t\leq (k+1)T}{\sup}\ll|X^{N,1}_t - \bar X^1_t\rr|}$$ can be obtained inductively by
$$S_k^N = C^1\ll(S_{k-1}^N + N^{-1/2}\rr)^{C^2}$$
for the same constants $C^1,C^2$ for all $k.$
\end{rem}

Before proving Theorem~\ref{chaosmckeangeneral}, let us state a lemma about some a priori estimates of the process~$(X^{N,1}_t)_{t\geq 0}.$

\begin{lem}
\label{apriorixnmckeangeneral}
For every $N\in\n^*$, let $(X^{N,i})_{1\leq i \leq N}$ be the solution of~\eqref{xnmckeangeneral}. For any $t\geq 0,$
\begin{itemize}
\item[(i)] $\underset{N\in\n^*}{\sup}\underset{0\leq s\leq t}{\sup}\esp{e^{a|X^{N,1}_s|}}<\infty$ and $\underset{0\leq s\leq t}{\sup}\esp{e^{a|\bar X^{1}_s|}}<\infty,$ 
\item[(ii)] $\underset{N\in\n^*}{\sup}\esp{\underset{0\leq s\leq t}{\sup}|X^{N,1}_s|}<\infty$ and $\esp{\underset{0\leq s\leq t}{\sup}|\bar X^{1}_s|}<\infty.$ 
\end{itemize}
\end{lem}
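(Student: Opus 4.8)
The plan is to prove Lemma~\ref{apriorixnmckeangeneral} by essentially repeating the argument of Lemma~\ref{apriorimckeangeneral}, with the extra book-keeping needed to handle the mean-field jump term and to obtain bounds that are uniform in $N$. The statements for $\bar X^1$ follow directly from Lemma~\ref{apriorimckeangeneral} applied to equation~\eqref{barxmckeangeneral}, since we have already observed that \eqref{barxmckeangeneral} fits the framework of Theorem~\ref{wellposedmckeangeneral} with the modified bounded drift $b(x,m)+\int_\r\int_{F^{\n^*}}\Theta(y,x,m,v^1,v^2)f(y,m)d\nu(v)dm(y)$; one only needs to check that this effective drift is bounded, which is immediate from $\|f\|_\infty<\infty$ and the exponential-moment bound $\sup\int e^{a|\Theta|}d\nu<\infty$ (in particular $\sup\int|\Theta|d\nu<\infty$). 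So the real content is the uniform-in-$N$ estimate for $X^{N,1}$.

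For part~(i), I would fix $N$, write Ito's formula for $e^{aX^{N,1}_t}$, introduce the stopping time $\tau_M=\inf\{t:|X^{N,1}_t|>M\}$ (or, to be safe about explosion, $\tau_M=\inf\{t:\max_{j\le N}|X^{N,j}_t|>M\}$), and take expectations. The Brownian stochastic integral vanishes, the drift and quadratic-variation terms contribute $a\|b\|_\infty+\tfrac12 a^2\|\sigma\|_\infty^2$ times $\int_0^t\esp{e^{aX^{N,1}_{s\wedge\tau_M}}}ds$, and each of the two jump terms produces, after using the explicit form of the Poisson intensity, a contribution controlled by $\|f\|_\infty$ times the jump sizes. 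The first jump term gives $e^{aX^{N,1}_{s-}}(e^{a\Psi(X^{N,1}_{s-},\mu^N_{s-},v^1)}-1)$, bounded as in Lemma~\ref{apriorimckeangeneral} using $\sup\int e^{a|\Psi|}d\nu<\infty$. The mean-field term is the new point: after the jump of $\pi^j$ the variable $X^{N,1}$ moves by $\frac1N\Theta(X^{N,j}_{s-},X^{N,1}_{s-},\mu^N_{s-},v^j,v^1)$, so the increment of $e^{aX^{N,1}}$ is $e^{aX^{N,1}_{s-}}(e^{\frac aN\Theta}-1)$; summing over $j=1,\dots,N$ and integrating against the intensity $\|f\|_\infty\,ds\,d\nu(v)$ gives a bound by $\|f\|_\infty e^{aX^{N,1}_{s-}}\sum_{j=1}^N\int_{F^{\n^*}}|e^{\frac aN\Theta(X^{N,j}_{s-},X^{N,1}_{s-},\mu^N_{s-},v^j,v^1)}-1|d\nu(v)$. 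Using the elementary inequality $|e^{t}-1|\le |t| e^{|t|}\le |t|e^{a|\Theta|}$ for $|t|\le a$ (valid once $N\ge1$ and $\frac aN|\Theta|\le a|\Theta|$), this is at most $\frac aN\|f\|_\infty e^{aX^{N,1}_{s-}}\sum_{j=1}^N\int|\Theta|e^{a|\Theta|}d\nu$, and since $\nu$ is symmetric and $\Theta$'s exponential moment is bounded uniformly in all variables, $\int|\Theta|e^{a|\Theta|}d\nu\le C$ (e.g.\ by $|\Theta|e^{a|\Theta|}\le C_\delta e^{(a+\delta)|\Theta|/...}$, or just noting $\sup\int e^{2a|\Theta|}d\nu<\infty$ suffices via Cauchy--Schwarz after possibly shrinking $a$; in any case a crude bound $\int |\Theta|e^{a|\Theta|}d\nu\le C$ holds). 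Thus the whole mean-field term contributes at most $C e^{aX^{N,1}_{s-}}$ with $C$ independent of $N$. Setting $u^M_t:=\esp{e^{aX^{N,1}_{t\wedge\tau_M}}}$ we get $u^M_t\le \esp{e^{aX^{N,1}_0}}+K\int_0^t u^M_s\,ds$ with $K$ independent of $N$ (and the initial value bounded since the $X^{N,i}_0$ all satisfy the initial condition of Assumption~\ref{hypnonlip}, so $\sup_N\esp{e^{aX^{N,1}_0}}<\infty$ — this uses the i.i.d.\ hypothesis only mildly, or one may just assume a uniform bound). Gr\"onwall then gives $\sup_{s\le t}u^M_s\le \esp{e^{aX^{N,1}_0}}e^{Kt}$ uniformly in $M$ and $N$; letting $M\to\infty$ (so $\tau_M\to\infty$ a.s.) and applying Fatou yields $\sup_N\sup_{s\le t}\esp{e^{aX^{N,1}_s}}\le Ce^{Kt}<\infty$. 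The bound for $e^{-aX^{N,1}}$ is symmetric, completing~(i).

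For part~(ii) I would argue exactly as in Lemma~\ref{apriorimckeangeneral}$.(ii)$: from the form of \eqref{xnmckeangeneral},
\begin{align*}
\esp{\sup_{0\le s\le t}|X^{N,1}_s|}\le{}& \esp{|X^{N,1}_0|}+\|b\|_\infty t + \esp{\sup_{s\le t}\ll|\int_0^s\sigma(X^{N,1}_r,\mu^N_r)dW^1_r\rr|}\\
&+\|f\|_\infty\int_0^t\esp{\int_{F^{\n^*}}|\Psi(X^{N,1}_s,\mu^N_s,v^1)|d\nu(v)}ds\\
&+\frac1N\sum_{j=1}^N\|f\|_\infty\int_0^t\esp{\int_{F^{\n^*}}|\Theta(X^{N,j}_s,X^{N,1}_s,\mu^N_s,v^j,v^1)|d\nu(v)}ds,
\end{align*}
and then bound the Brownian term by BDG and $\|\sigma\|_\infty$, the $\Psi$ term by $\sup\int|\Psi|d\nu<\infty$, and the mean-field term by $\sup\int|\Theta|d\nu<\infty$ (both finite by the exponential-moment hypotheses), all uniformly in $N$; the $\frac1N\sum_{j=1}^N$ is then just an average of $N$ equal bounds. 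This gives $\sup_N\esp{\sup_{s\le t}|X^{N,1}_s|}<\infty$. The main obstacle is purely the first item and specifically controlling the simultaneous-jump term uniformly in $N$; the key observation that makes it work is that dividing $\Theta$ by $N$ inside the exponential exactly compensates the sum over $N$ particles, turning $e^{\Theta/N}-1\approx \Theta/N$ into an $O(1/N)$ contribution per particle and hence $O(1)$ in total, with constants controlled by the $N$-independent exponential moments of $\Theta$ assumed in this section.
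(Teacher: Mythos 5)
Your overall strategy is the same as the paper's sketch: It\^o's formula for $e^{\pm aX^{N,1}}$ with a localizing stopping time, Gr\"onwall and Fatou, with the $1/N$ scaling of $\Theta$ compensating the sum over the $N$ Poisson measures; point~(ii) and the statements for $\bar X^1$ are handled as you do. The one step that does not go through as written is your bound on the mean-field jump term. You reduce it to $\frac{a}{N}\|f\|_\infty e^{aX^{N,1}_{s-}}\sum_{j}\int_{F^{\n^*}}|\Theta|e^{a|\Theta|}d\nu$ and then assert that ``a crude bound $\int|\Theta|e^{a|\Theta|}d\nu\le C$ holds''. This does not follow from the standing hypothesis, which is only $\sup_{x,x',m}\int e^{a|\Theta|}d\nu<\infty$ for the fixed $a$: if the image of $\nu$ under $|\Theta|$ has density proportional to $e^{-a\theta}\theta^{-2}$ for large $\theta$, then $\int e^{a|\Theta|}d\nu<\infty$ while $\int|\Theta|e^{a|\Theta|}d\nu=\infty$. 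Your two suggested repairs are also unavailable: Cauchy--Schwarz requires $\sup\int e^{2a|\Theta|}d\nu<\infty$, which is not assumed, and ``shrinking $a$'' proves a different statement --- the lemma is needed with exactly the $a$ of Assumption~\ref{hypnonlip}, because it is $\int_\r e^{a|x|}d\mu^N_s(x)=\frac1N\sum_j e^{a|X^{N,j}_s|}$, with that same $a$, that enters the locally Lipschitz bound in the proof of Theorem~\ref{chaosmckeangeneral}.

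The gap is easy to close, and this is exactly where the paper's argument differs from yours: instead of $|e^t-1|\le |t|e^{|t|}$, use the elementary inequality $n(e^{x/n}-1)\le e^x-1\le e^x$ for all $x\ge 0$ and $n\in\n^*$ (compare the exponential series termwise), or in signed form $N|e^{t/N}-1|\le N(e^{|t|/N}-1)\le e^{|t|}$. Applied with $t=a\Theta(X^{N,j}_{s-},X^{N,1}_{s-},\mu^N_{s-},v^j,v^1)$, this bounds the whole sum over $j$ of the jump contributions by $\|f\|_\infty e^{aX^{N,1}_{s-}}\sup_{x,x',m}\int_{F^{\n^*}}e^{a|\Theta(x,x',m,v^1,v^2)|}d\nu(v)$, i.e.\ exactly by the quantity assumed finite, with no extra integrability of $\Theta$. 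With this substitution your proof coincides with the paper's.
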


%
%

\begin{proof}[Sketch of proof of Lemma~\ref{apriorixnmckeangeneral}]
We only give the main steps of the proof of the first part of the point~$(i)$ since the main part of the proof relies on the same computations as in the proof of Lemma~\ref{apriorimckeangeneral}. The only new property of the lemma is the fact that the bounds has to be uniform in~$N$. This property is easy to prove for the point~$(ii)$ of the lemma in this model because of the normalization in~$N^{-1}$ of the jump term~$\Theta$ of~\eqref{xnmckeangeneral}.

{
For the point~$(i),$ as in the proof of Lemma~\ref{apriorimckeangeneral}, we apply Ito's formula to the function $x\mapsto |x|^n$ (for $n\geq 2$) and then, we can note that, by exchangeability,
\begin{multline*}
\esp{|X^{N,1}_t|^n}\leq \esp{|X^{N,1}_0|^n} + Ct n^{n_2+1} + Cn\int_0^t \esp{|X^{N,1}_s|^n}ds\\
+ (N-1)\int_0^t \esp{\int_{F^{\n^*}}\ll|\ll|X^{N,i}_s + N^{-1}\Theta(X^{N,2}_s,X^{N,1}_s,\mu^N_s,v^2,v^1)\rr|^n - |X^{N,1}_s| ^n\rr|d\nu(v)f(X^{N,2})}, 
\end{multline*}
where the first line above is the control of the drift term, the Brownian term, and the jump term w.r.t. the heigth jump function~$\Psi$. In particular, thanks to~\eqref{hyptheta}, and since $X^{N,1}$ and $X^{N,2}$ have the same distribution, we have
$$\esp{|X^{N,1}_t|^n}\leq \esp{|X^{N,1}_0|^n} + Ct n^{n/2+1} + Cn\int_0^t \esp{|X^{N,1}_s|^n}ds.$$

This last inequality is independent of~$N$ and is the equivalent of~\eqref{xpuisn}, so the rest of the proof is the same as in Lemma~\ref{apriorimckeangeneral}.}
%
\end{proof}


\begin{proof}[Proof of Theorem~\ref{chaosmckeangeneral}]
We follow the ideas of Proposition~3.1 of \cite{andreis_mckeanvlasov_2018}. Instead of introducing an auxiliary system, we rewrite artificially the SDE~\eqref{xnmckeangeneral} as
\begin{align*}
dX^{N,i}_t=& b(X^{N,i}_t,\mu^N_t)dt + \sigma(X^{N,i}_t,\mu^N_t)dW^i_t + \int_{\r_+\times F^{\n^*}}\Psi(X^{N,i}_{t-},\mu^N_{t-},v^i)\uno{z\leq f(X^{N,i}_{t-},\mu^N_{t-})}d\pi^i(t,z,v)\\
&+ \int_\r\int_{F^{\n^*}}\Theta(x,X^{N,i}_{t},\mu^N_{t},v^1,v^2)f(x,\mu^N_t)d\nu(v)d\mu^N_t(x) + dG^N_t,
\end{align*}
where
\begin{multline*}
G^N_t= \frac1N\sum_{j=1}^N\ll[\int_{[0,t]\times \r_+\times F^{\n^*}}\Theta(X^{N,j}_{s-},X^{N,i}_{s-},\mu^N_{s-},v^j,v^i)\uno{z\leq f(X^{N,j}_{s-},\mu^N_{s-})}d\pi^j(s,z,v)\rr.\\
\ll.-\int_0^t\int_{F^{\n^*}}\Theta(X^{N,j}_s,X^{N,i}_{s},\mu^N_{s},v^1,v^2)f(X^{N,j}_s,\mu^N_s)d\nu(v)ds\rr].
\end{multline*}

Let us define
$$u^N(t) := \esp{\underset{0\leq s\leq t}{\sup}\ll|X^{N,1}_s-\bar X^1_s\rr|}.$$


Let us note $\bar\mu^N_t := N^{-1}\sum_{j=1}^N\delta_{\bar X^j}.$ By triangle inequality and since for all $m,m'\in\mathcal{P}_1(\r),W_1(m,m')\leq W_2(m,m'),$ we have for all $t\geq 0,$
$$W_1(\mu^N_t,\bar\mu_t)\leq W_1(\mu^N_t,\bar\mu^N_t) + W_1(\bar\mu^N_t,\bar\mu_t)\leq \frac1N\sum_{j=1}^N\ll|X^{N,j}_t - \bar X^j_t\rr| + W_2(\bar\mu^N_t,\bar \mu_t).$$

Besides, using Theorem~1 of \cite{fournier_rate_2015} with $d=1,$ $p=2$ and any $q>4$ (and using Lemma~\ref{apriorixnmckeangeneral}), we have
$$\esp{W_2(\bar\mu^N_t,\bar \mu_t)} \leq C\esp{|\bar X_t|^q}^{p/q} N^{-1/2}\leq C\ll(1+\esp{e^{a\bar X_t}}\rr)^{p/q} N^{-1/2}\leq C_t N^{-1/2}.$$

As a consequence, and thanks to Lemma~\ref{apriorixnmckeangeneral}, for all $t\geq 0,$
$$\esp{\underset{0\leq s\leq t}{\sup} W_1(\mu^N_s,\mu_s)} \leq u^N(t) + C_t N^{-1/2}.$$

Whence the same truncation arguments as the ones used in Sections~\ref{uniquenessmckeangeneral} and~\ref{existencemckeangeneral} allows to prove that, for all $t\leq T,$
$$u^N(t)\leq u^N(0) + C_T\int_0^t \ll[(1+r^N_s)u^N(s) + e^{-r^N_s}\rr]ds + 2L\sqrt{T}u^N(t) + \esp{\underset{0\leq s\leq T}{\sup}|G^N_s|} + C_T N^{-1/2},$$
where $C_T$ does not depend on~$N$ thanks to Lemma~\ref{apriorixnmckeangeneral}.

Now, fixing $T=1/(16L^2)$ (such that $2L\sqrt{T}\leq 1/2$), we obtain
\begin{equation}
\label{untpremckeangeneral}
u^N(t)\leq 2u^N(0) + C_T\int_0^t \ll[(1+r^N_s)u^N(s) + e^{-r^N_s}\rr]ds + 2\esp{\underset{0\leq s\leq T}{\sup}|G^N_s|}+ C_t N^{-1/2}.
\end{equation}

To control the term $G^N_s,$ we use BDG inequality,
$$\esp{\underset{0\leq t\leq T}{\sup}\ll|G^N_t\rr|^2}\leq\frac{C}{N^2}\sum_{j=1}^N\esp{\int_0^t\int_{F^{\n^*}}\Theta(X^{N,j}_s,X^{N,i}_s,\mu^N_s,v^j,v^i)^2f(X^{N,j}_s,\mu^N_s)d\nu(v)ds}\leq \frac{C_T}{N},$$
where we have used the growth condition on $\Theta$~\eqref{hyptheta} and Lemma~\ref{apriorixnmckeangeneral}.

Hence, by Cauchy-Schwarz's inequality,
$$\esp{\underset{0\leq t\leq T}{\sup}\ll|G^N_t\rr|}\leq C_T N^{-1/2}.$$

Then we can rewrite~\eqref{untpremckeangeneral} as
$$u^N(t)\leq 2u^N(0)+ C_T\int_0^t \ll[(1+r^N_s)u^N(s) + e^{-r^N_s}\rr]ds + C_TN^{-1/2}.$$

Now, let $D_T := \max(C_T,\sup_N\sup_{s\leq T} u^N(s),1)$ which is finite by Lemma~\ref{apriorixnmckeangeneral}$.(ii),$ and define
$$v^N(t):=\frac{u^N(t)}{D_Te^2}\leq e^{-2}.$$

Choosing $r^N_t := -\ln v^N(t),$ we have for all $0\leq t\leq T,$
\begin{align*}
v^N(t)\leq& 2v^N(0) + C_T\int_0^t (2-\ln v^N(s))v^N(s)ds + \frac{C_T}{\sqrt{N}}\\
\leq& 2v^N(0) -2C_T\int_0^t v^N(s)\ln v^N(s)ds + \frac{C_T}{\sqrt{N}},
\end{align*}
and Osgood's lemma allows to conclude that 
$$-M(v(T))+M(2v^N(0)+C_T N^{-1/2})\leq 2C_T T,$$
where $M(x)=\int_x^{e^{-2}}\frac{1}{-s\ln s}ds = \ln(-\ln x) - \ln 2.$ This implies that
$$\ln(-\ln(2v^N(0)+C^1_TN^{-1/2})) - \ln(-\ln v^N(t))\leq C^2_T,$$
for some constants $C^1_T,C^2_T>0,$ where we distinguish $C^1_T$ and $C^2_T$ for clarity. This implies that, for all $0\leq t\leq T,$
\begin{equation}
\label{eqcvspeed}
v^N(t)\leq \ll(2v^N(0)+C^1_T N^{-1/2}\rr)^{\exp (-C^2_T)}.
\end{equation}

Hence
$$\esp{\underset{0\leq t\leq T}{\sup}\ll|X^{N,1}_t - \bar X^1_t\rr|}\underset{N\rightarrow\infty}{\longrightarrow}0,$$
for a $T>0$ sufficiently small that does not depend on the initial conditions (recalling that we have taken $T=1/(16L^2)$). Then, iterating this reasoning on $[T,2T],$ we can prove
$$ \esp{\underset{T\leq t\leq 2T}{\sup}\ll|X^{N,1}_t - \bar X^1_t\rr|}\underset{N\rightarrow\infty}{\longrightarrow}0,$$
noticing that the "initial conditions" on~$[T,2T]$ satisfy the same condition as~\eqref{condinitcv0mckeangeneral}:
$$\eps^N_T := \esp{\ll|X^{N,1}_T - \bar X^1_T\rr|}\underset{N\rightarrow\infty}{\longrightarrow}0.$$

Finally, by induction, we can prove that for all $k\in\n^*,$
$$\esp{\underset{0\leq t\leq kT}{\sup} \ll|X^{N,1}_t - \bar X^1_t\rr|}\underset{N\rightarrow\infty}{\longrightarrow}0,$$
which proves the result.
\end{proof}

\section{Appendix}

\subsection{Osgood's lemma}

We have used many times a generalization of Gr\"onwall's lemma, which is Osgood's lemma. Let us write it explicitly for self-containedness (see e.g. Lemma 3.4 of \cite{bahouri_fourier_2011}).

\begin{lem}\label{osgood}
Let $ \varrho $ be a measurable function from $[t_0, T ] $ to $ [0, b ] , $ $\gamma $ a locally integrable function from $[t_0, T ] $ to $ \r_+,$ and $\mu $ a continuous and non-decreasing function from $ [0,  b] $ to $\r_+.$ Suppose that for all $t \in [t_0, T ] $ and for some $ c \in ]  0, b[ , $  
$$ \varrho ( t) \le c + \int_{t_0}^t \gamma ( s) \mu ( \varrho (s) ) ds .$$ 
Then, with $ M ( x) := \int_x^b \frac{ds }{\mu (s) } , $ 
$$ - M( \varrho (t) ) + M (c) \le \int_{t_0}^t \gamma (s) ds.$$ 
\end{lem}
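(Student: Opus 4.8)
The plan is to reduce the integral inequality to a differential inequality for an absolutely continuous majorant of $\varrho$, then integrate after dividing by $\mu$. First I would set
$$R(t) := c + \int_{t_0}^t \gamma(s)\,\mu(\varrho(s))\,ds, \qquad t\in[t_0,T].$$
Since $\varrho$ is measurable with values in $[0,b]$ and $\mu$ is continuous and non-decreasing, $\mu(\varrho(\cdot))$ is measurable and bounded by $\mu(b)$; combined with the local integrability of $\gamma$ on the compact interval $[t_0,T]$, this shows that $s\mapsto\gamma(s)\,\mu(\varrho(s))$ is integrable, so $R$ is absolutely continuous. Moreover $\gamma\,\mu(\varrho)\ge 0$, hence $R$ is non-decreasing with $R(t_0)=c$ and $R(t)\ge c>0$ for all $t$; and the hypothesis is exactly $\varrho(t)\le R(t)$ for every $t\in[t_0,T]$.

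Next, since $\mu$ is non-decreasing and $\varrho(t)\le R(t)$, I obtain the a.e. differential inequality
$$R'(t) = \gamma(t)\,\mu(\varrho(t)) \le \gamma(t)\,\mu(R(t)).$$
Because $R(t)\ge c>0$ and $\mu$ is non-decreasing, $\mu(R(t))\ge\mu(c)>0$, so $1/\mu(R(\cdot))$ is bounded. The function $M(x)=\int_x^b ds/\mu(s)$ is $C^1$ on $]0,b]$ with $M'=-1/\mu$; if the majorant $R$ exceeds $b$, one first extends $\mu$ by the constant $\mu(b)$ on $[b,\infty)$, which keeps $M$ of class $C^1$ and non-increasing on all of $]0,\infty)$ and does not alter the values $M(\varrho(t))$ and $M(c)$ since $\varrho(t),c\in[0,b]$. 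Then $t\mapsto M(R(t))$ is absolutely continuous (composition of the absolutely continuous $R$, whose range lies in the compact set $[c,R(T)]\subset]0,\infty)$, with the $C^1$, hence locally Lipschitz, function $M$), and a.e.
$$\frac{d}{dt}M(R(t)) = -\frac{R'(t)}{\mu(R(t))} \ge -\gamma(t).$$
Integrating from $t_0$ to $t$ and using $R(t_0)=c$ gives $M(R(t))-M(c)\ge-\int_{t_0}^t\gamma(s)\,ds$, i.e. $-M(R(t))+M(c)\le\int_{t_0}^t\gamma(s)\,ds$.

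Finally I would use the monotonicity of $M$: since $M$ is non-increasing and $\varrho(t)\le R(t)$, we get $M(\varrho(t))\ge M(R(t))$, so
$$-M(\varrho(t))+M(c)\le -M(R(t))+M(c)\le\int_{t_0}^t\gamma(s)\,ds,$$
which is the asserted conclusion. I do not expect a genuine obstacle: this is a short and classical argument. The only points requiring a little care are the justification that $M\circ R$ is absolutely continuous so that the chain rule and the fundamental theorem of calculus apply (handled by noting that $R$ stays in a compact subinterval of $]0,\infty)$ on which $M$ is Lipschitz) and the harmless extension of $\mu$ beyond $b$; the degenerate situation $\mu(c)=0$ is ruled out implicitly, since otherwise $M\equiv+\infty$ in a right neighbourhood of $0$ and the statement carries no information.
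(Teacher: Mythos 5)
Your argument is correct, and it is the standard proof of Osgood's lemma: majorize $\varrho$ by the absolutely continuous, non-decreasing function $R(t)=c+\int_{t_0}^t\gamma(s)\mu(\varrho(s))ds$, use monotonicity of $\mu$ to get $R'\leq \gamma\,\mu(R)$ a.e., integrate $\frac{d}{dt}M(R(t))\geq-\gamma(t)$, and finish with the monotonicity of $M$. Note that the paper itself does not prove this lemma: it is quoted for self-containedness from Lemma~3.4 of \cite{bahouri_fourier_2011}, so there is no "paper proof" to compare against; your write-up essentially reproduces the classical argument found there. The only point where you are slightly informal is the case $\mu(c)=0$: your claim that the statement then "carries no information" is not quite accurate (with $\mu(c)=0$ one can still have $M(c)<\infty$, e.g. $\mu(s)=\sqrt{(s-c)_+}$, and the lemma remains true and sharp), and in that case the Lipschitz/chain-rule step for $M\circ R$ near the value $c$ needs an extra limiting argument (work on $\{R\geq c+\eps\}$ and let $\eps\downarrow0$, or note that when $\mu$ vanishes on a right neighbourhood of $c$ one gets $R\equiv c$ and the conclusion is trivial). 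This caveat is harmless for the paper, where the lemma is only applied with $\mu(v)=-v\ln v$ and $c\in\,]0,e^{-2}[$, so that $\mu(c)>0$ and your bounded-$1/\mu(R)$ argument applies verbatim.
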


\subsection{Proof of Lemma~\ref{seriecv0}}

{
Let us prove the result by contraposition. So let us assume that there exists no subsequence of $(u_n)_n$ that vanishes. Then, it is sufficient to prove that there exist $n_0\in\n$ and $m>0$ such that, for any $n\geq n_0,$
\begin{equation}\label{minoree}
u_n \geq m.
\end{equation}
Indeed,~\eqref{minoree} implies that $S_n$ grows at least linearly, whence $S_n/n$ cannot vanishes.

Now we prove~\eqref{minoree} by contradiction. So we assume that, for any $n\in\n,\eps>0,$ there exists some integer $\phi(n,\eps)\geq n$ such that,
$$u_{\phi(n,\eps)} \leq \eps.$$

Then, we can define $n_1 := \phi(0,1),$ and, by induction, $n_{k+1} := \phi(n_k,2^{-(k+1)}).$ So $(u_{n_k})_k$ is a subsequence of $(u_n)_n$ that vanishes.
}

\subsection{Proof of Lemma~\ref{cvdmesureempirique}}

The proof consists in noticing that, for all $t\leq T,$

$$W_1(\mu_n(t),\mu(\lambda_n(t)))\leq \frac1N\sum_{k=1}^N\ll|x_n^k(t)-x^k(\lambda_n(t))\rr|.$$

\section*{Acknowledgements}

The author would like to thank his thesis supervisors Eva L\"ocherbach and Dasha Loukianova for their support and for fruitful discussion about Osgood's lemma and the truncation arguments used in the paper.

\bibliography{biblio}
\end{document}